\newtheorem{theorem}{Theorem}
\newtheorem{prop}[theorem]{Proposition}
\newtheorem{lem}[theorem]{Lemma}
\newtheorem{lemma}[theorem]{Lemma}
\theoremstyle{definition}
\newtheorem{definition}[theorem]{Definition}
\newtheorem{question}{Question}
\newtheorem{ex}[theorem]{Example}
\newtheorem{example}[theorem]{Example}
\newtheorem{remark}[theorem]{Remark}
\DeclareMathOperator{\Spec}{Spec}
\DeclareMathOperator{\C}{\text{\u C}}
\DeclareMathOperator{\height}{ht}
\DeclareMathOperator{\grade}{grade}
\DeclareMathOperator{\depth}{depth}
\DeclareMathOperator{\pgrade}{p-grade}
\DeclareMathOperator{\Min}{Min}
\DeclareMathOperator{\Ass}{Ass}
\DeclareMathOperator{\ara}{ara}
\newcommand{\m}{\mathfrak{m}}
\newcommand{\n}{\mathfrak{n}}
\newcommand{\p}{\mathfrak{p}}
\newcommand{\K}{{K}}
\author{Youngsu Kim}
\address{Department of Mathematics, University of Arkansas, Fayetteville,
Arkansas 72701, U.S.A.}
\email{yk009@uark.edu}
\author{Andrew Walker}
\address{Department of Mathematics, College of Charleston, Charleston, South Carolina 29424, U.S.A.}
\email{walkeraj@cofc.edu}
\title{A note on Non-Noetherian Cohen-Macaulay rings}
\date{\today}
\subjclass[2010]{13H10}
\begin{document}
\maketitle
\section{Introduction}

In this note, we study the Cohen-Macaulayness of non-Noetherian rings. 
We show that Hochster's celebrated theorem that a finitely generated normal semigroup ring is Cohen-Macaulay does not extend to non-Noetherian rings. 
We also show that for any valuation domain $V$ of finite Krull dimension, $V[x]$ is Cohen-Macaulay in the sense of Hamilton-Marley. 
All rings are commutative with unity, and all $R$-modules are unitary. \\

In commutative algebra and algebraic geometry, Cohen-Macaulayness of a ring or module is a desirable property.
If $R$ is a Noetherian local ring, we say that $R$ is {\it Cohen-Macaulay} if $\operatorname{depth} R = \dim R$, 
where $\operatorname{depth} R$ denotes the maximal length of a regular sequence contained in the maximal ideal of $R$.
In the Noetherian case, there are several characterizations of Cohen-Macaulay rings, and the notion of Cohen-Macaulayness is rather well understood.
In the general case, it is not clear what the ``right'' definition of a Cohen-Macaulay ring in the non-Noetherian case is. 
A direct extension of the definition to the non-Noetherian case seems to be a bit unnatural: 
For example, every valuation domain (which is not a field) has depth one, whereas the dimension can be arbitrarily large.
Valuation domains belong to the class of coherent regular rings, which when Noetherian, are well-known to be Cohen-Macaulay.
It is natural to search for a definition which generalizes the Noetherian case and includes meaningful non-Noetherian rings in the class of ``Cohen-Macaulay'' rings.
\\

S.\ Glaz initiated the study of this extension question. 
In \cite{Gla94}, she surveyed the ascent and descent properties of the extension $R^G \subset R$, where $G$ is a group acting on a commutative ring $R$, and $R^G$ is the ring of invariants. In section 4 of \cite{Gla94}, she proposed a conjecture:
\begin{quote} Let $R$ be a coherent regular ring, and let $G$ be a group of automorphisms of $R$. Assume that $R^G$ is a module retract of $R$ and that $R$ is a finitely generated $R^G$-module, then $R^G$ is a {\it Cohen-Macaulay} ring.
\end{quote}
As $R^G$ (and $R$) are not necessarily Noetherian (hence often not Cohen-Macaulay in the sense of $\depth R^G = \dim R^G$), she raised the question of finding a suitable extension of the definition of a Cohen-Macaulay ring which is not necessarily Noetherian.\\

Before explaining her definition and other generalizations, first we list characterizations of the Cohen-Macaulayness of a Noetherian local ring $R$. The following conditions are equivalent.
\begin{itemize}
\item [{[CM1]}] $R$ is Cohen-Macaulay.
\item [{[CM2]}] For every proper ideal $I$ of $R$, $\height I = \grade I$.
\item [{[CM3]}] For every proper ideal $I$ of $R$, if $\height I = \mu (I)$, then $I$ is unmixed.
\item [{[CM4]}]Every system of parameters of $R$ is a regular sequence.
\end{itemize}
Here, $\mu(I)$ denotes the minimal number of generators of $I$, and the {\it grade} of an ideal $I$ of $R$, denoted $\grade I$, is the length of a maximal regular sequence in $I$. 
These conditions will no longer be equivalent in the non-Noetherian case. Note that the notions of grade, unmixedness, and a system of parameters work well in the Noetherian case, but their useful properties do not hold, in general, in non-Noetherian rings.
For instance, in the Noetherian case, $\grade I > 0$ iff $0 :_R I = 0$, 
but this equivalence can fail to hold in the non-Noetherian case, cf. \cite[Example on p.\ 145]{Nor76}.
This issue can be fixed by considering the grade of $I$ in the polynomial extension $R[x]$. 

\begin{definition}[{\cite{Hoc74}}]
The {\it polynomial grade} of $I$, denoted $\pgrade I$, is 
\begin{equation}
\pgrade I = \sup \{ \grade IR[x_1,\dots,x_i] \mid i \in \mathbb{N} \}.
\end{equation}
\end{definition}
Note that $\grade I \le \pgrade I$, and equality holds if $R$ is Noetherian or $I$ is a complete intersection ideal. \\

Later, in \cite[Sec. 6]{Gla95}, Glaz defined an arbitrary ring $R$ to be {\it Cohen-Macaulay} if each  prime ideal $\mathfrak p$ of $R$, 
$\height \p = \pgrade \p$.
Her goal was to have a definition of a Cohen-Macaulay ring such that 
\begin{itemize}
\item [{[G1]}] the definition agrees in the Noetherian case, 
\item [{[G2]}] regular coherent rings are Cohen-Macaulay, and
\item [{[G3]}] certain invariant rings of regular coherent rings are Cohen-Macaulay. 
\end{itemize}


In \cite{Ham}, T.\ Hamilton continued the study of non-Noetherian Cohen-Macaulayness by investigating condition [CM3].
We say $I$ is {\it unmixed} if $\height I = \height \mathfrak \p$ for all associated primes $\p$ of $I$. 
A prime $\p$ is {\it associated} to $I$ if there exists $x \in R$ such that $\p = I :_R x$. 
In the Noetherian case, every proper ideal has a non-empty set of associated primes. 
This does not hold for non-Noetherian rings. 
A natural generalization of the notion of associated primes are the {\it weakly associated primes}, which are the primes $\p$ that are minimal over $I :_R x$ for some $x \in R$.
For any proper ideal $I$ in an arbitrary ring $R$, the set of weakly associated primes of $I$ is non-empty, 
and 
when the ring is Noetherian,
the weakly associated primes of $I$ and the associated primes of $I$ coincide. 
She defined $R$ to be {\it weakly-Bourbaki unmixed} if for each finitely generated ideal $I$ of $R$ with $\mu(I) \le \height I$, the weakly associated primes of $I$ are precisely the primes minimal over $I$.
In the same paper, she added two additional desirable conditions that should hold for $R$ to be Cohen-Macaulay.
\begin{itemize}
\item [{[H1]}] $R$ is Cohen-Macaulay iff $R[x]$ is Cohen-Macaulay;
\item [{[H2]}] $R$ is Cohen-Macaulay iff $R_\p$ is Cohen-Macaulay for all prime ideals $\p$.
\end{itemize}
With this notion of Cohen-Macaulayness, she was able to show that weak Bourbaki rings satisfy [G1] and that the if parts of both [H1],[H2] hold, see \cite[Theorems 1-3]{Ham}.\\

Most recently, Hamilton and Marley in \cite{HM} defined the notion of a strong parameter sequence which generalizes the notion of a system of parameters (see section 2 for the definition). 
They define an arbitrary ring $R$ to be {\it Cohen-Macaulay} if every strong parameter sequence of $R$ is a regular sequence of $R$.
With this definition, the authors were able to show the following:
\begin{itemize}
    \item Condition [G1] holds (\cite[Proposition 3.6, Proposition 4.2]{HM}).
    \item Condition [G2] holds locally (\cite[Theorem 4.8]{HM}). That is a local coherent regular ring is Cohen-Macaulay. (Thus, any valuation domain of finite Krull dimension is Cohen-Macaulay.)
    \item Condition [G3] holds if $\dim R \le 2$ (see \cite[Corollary 4.15]{HM} for the precise statement).
    \item The if parts of Hamilton's conditions [H1],[H2] hold (\cite[Corollary 4.6, Proposition 4.7]{HM}).
\end{itemize}
Furthermore, the two notions of Cohen-Macaulayness introduced by Glaz and Hamilton are both stronger condition than that of Hamilton-Marley cf.\ \cite[Proposition 2.6, Proposition 4.10]{HM}, 
and there are in fact examples showing that these notions of Cohen-Macaulayness need not be equivalent cf. \cite[Section 4]{AT}.
The primary goal of this paper is to study this definition of Cohen-Macaulay introduced by Hamilton and Marley.
In the sequel, unless otherwise stated, when we say a ring is Cohen-Macaulay, we always mean that it is Cohen-Macaulay in the sense of Hamilton and Marley.\\

Our main result is on the Cohen-Macaulayness of semigroup rings. 
A semigroup is called {\it affine} if it is finitely generated. 
Hence an affine semigroup ring is Noetherian.
In \cite[Example 4.9]{HM}, the authors show that the ring $\K + x \K [[x,y]]$ is non-Noetherian and Cohen-Macaulay, where $K$ is a field. 
This was extended to the semigroup rings $\K + x\K[x,y]$ in \cite[Theorem 4.10]{ADT}. 
Note that this semigroup ring is normal, and 
a normal affine semigroup ring is Cohen-Macaulay \cite{Hoc72}.
To this end, in \cite{ADT}, the authors proposed the following question.

\begin{question}[{\cite[Question 5.1]{ADT}}]\label{quesADTInt}
Let $R$ be a semigroup ring of finite Krull dimension which is not necessarily Noetherian. 
Then is $R$ Cohen-Macaulay if it is normal?
\end{question}
 
To answer this question, we study a family of rings $R = k + Q$, where $Q$ is an ideal of a finitely generated $k$-algebra $S$, where $k$ is a field. 
We note that this is a special case of amalgamated algebras in \cite{dAFF}. 
One of the challenging parts in studying the Cohen-Macaulayness in the sense of Hamilton and Marley is the verification of the condition of being a parameter sequence. 
In this setup, we give a method that can quickly verify this condition (\Cref{thmH2} and \Cref{lemChar}).
With this construction, we are able to answer \Cref{quesADTInt}, negatively.

\begin{theorem}[{\Cref{thmMain1}}]
The semigroup ring $K + xK[x,y,z]$ is normal. 
The sequence $xy,xz$ is a strong parameter sequence, but it is not a regular sequence. 
In particular, this ring is non-Noetherian and normal but not Cohen-Macaulay. 
\end{theorem}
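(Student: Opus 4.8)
The plan is to exploit the pullback description $R=K+Q$ with $S=K[x,y,z]$ and $Q=xS$, for which \Cref{thmH2} and \Cref{lemChar} reduce every parameter-sequence check to data over the Noetherian ring $S$. The structural fact driving all the computations is that $R$ and $S$ agree once $x$ is inverted; a short monomial computation gives $R_{xy}=S_{xy}$, $R_{xz}=S_{xz}$, and $R_{xyz}=S_{xyz}$.

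For normality, note $S$ is a normal domain with $\operatorname{Frac}(R)=\operatorname{Frac}(S)=K(x,y,z)$ and $R\subseteq S$, so the integral closure $\overline R$ lies between $R$ and $S$. Since $R\subseteq S$ is $\mathbb{Z}^{3}$-graded, $\overline R$ is a graded subring, hence $K$-spanned by the monomials of $S$ integral over $R$. A homogeneous integral equation shows a monomial $m$ is integral over $R$ iff $m^{t}\in R$ for some $t\ge1$, and comparing $x$-degrees this forces $m\in R$; thus $\overline R=R$. The same monomial bookkeeping shows the defining semigroup is not finitely generated (each monomial of $x$-degree $1$ is indecomposable), so $R$ is non-Noetherian.

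To see $xy,xz$ is a strong parameter sequence I must verify, for each initial segment, the conditions defining a parameter sequence: properness of the ideal, nonvanishing of its top \v{C}ech cohomology, and vanishing of that top cohomology after localizing at primes of dimension below the length. Properness is clear from $(xy,xz)\subseteq Q\neq R$. For nonvanishing, the identifications $R_{xy}=S_{xy}$, etc., make the \v{C}ech complexes $\check C^{\bullet}(xy,xz;R)$ and $\check C^{\bullet}(xy,xz;S)$ agree outside degree $0$, so $\check H^{2}(xy,xz;R)\cong H^{2}_{(xy,xz)S}(S)$. On the Noetherian ring $S$ the ideal $(xy,xz)$ is, up to units, $(x)$ away from $V(y,z)$ and $(y,z)$ away from $V(x)$, so a Mayer--Vietoris argument yields $0\to H^{2}_{(y,z)}(S)\to H^{2}_{(xy,xz)S}(S)\to H^{3}_{\m}(S)\to 0$; this is nonzero with support exactly $V((y,z)S)$. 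The length-one segment is handled the same way, with $\check H^{1}(xy;R)=R_{xy}/R\neq0$.

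The hard part is the localization (codimension) condition, since it involves heights of primes in the non-Noetherian ring $R$. Restricting scalars along $R\hookrightarrow S$, the support over $R$ of $\check H^{2}(xy,xz;R)$ is $V_R(P)$ with $P=(y,z)S\cap R$, so it suffices to prove $\height_{R} P\ge2$. I would exhibit the chain $(0)\subsetneq (y)S\cap R\subsetneq P$ of primes of $R$ (contractions of $(0)\subsetneq(y)\subsetneq(y,z)$ in $S$; strictness at the top holds because $xz\in P$ while $xz\notin(y)S$). Then every prime in the support has height $\ge2$, giving the condition; the length-one case is immediate since $R_{xy}/R$ is torsion. Packaging this comparison of $\Spec R$ with $\Spec S$ is exactly the content of \Cref{thmH2} and \Cref{lemChar}, and is the step I expect to be the main obstacle. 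Finally, $xy,xz$ is not regular: $R$ is a domain, so $xy$ is a nonzerodivisor, but $xyz\in R\setminus(xy)R$ (it has $x$-degree $1$ and is not a scalar multiple of $xy$, while $(xy)R=K\,xy+x^{2}yK[x,y,z]$), yet $xz\cdot xyz=x^{2}yz^{2}=(xy)(xz^{2})\in(xy)R$. Hence $xz$ is a zerodivisor modulo $(xy)R$, so $R$ is normal, non-Noetherian, and not Cohen--Macaulay.
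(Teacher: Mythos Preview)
Your normality argument, the non-Noetherianness via indecomposable monomials, the Mayer--Vietoris computation of $H^{2}_{(xy,xz)}(S)$, and the explicit witness $xyz$ showing that $xy,xz$ is not regular are all fine. The gap is in the verification that $xy,xz$ is a strong parameter sequence: you are not checking the conditions of \Cref{defPar}.

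That definition requires (1) properness, (2) weak proregularity, and (3) $H^{\ell}_{\underline{x}}(R)_{\p}\neq 0$ for \emph{every} $\p\in V_R((\underline{x})R)$. You never address (2), and what you call ``the localization (codimension) condition'' is not (3). Condition (3) asks that the top {\u C}ech cohomology be \emph{nonzero} at each prime containing $(xy,xz)R$; it says nothing about heights. Showing that every prime in $\Supp_R H^{2}$ has height $\ge 2$ recovers only the consequence recorded in \Cref{propSopHt}, not the defining hypothesis. What is actually required is the inclusion $V_R((xy,xz)R)\subseteq \Supp_R H^{2}$, and in particular $H^{2}_{xy,xz}(R)_{Q}\neq 0$; this is precisely the step the paper isolates in \Cref{lemNotCM}, handled by passing to $S_{\mathfrak q}$ for $\mathfrak q\in V_S(x,y,z)$ and running Mayer--Vietoris there. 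For (2), one must argue that $xy,xz$ is weakly proregular in the non-Noetherian ring $R$; this does not come for free and is supplied by \Cref{lem5terms}(b), using that $xy,xz$ lie in the conductor $Q=R:_{S}S$ and that $S$ is Noetherian.

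Finally, \Cref{thmH2} and \Cref{lemChar} are not the right references: they treat the situation $Q=(f,g)S$ with $Q$ two-generated, whereas here $Q=xS$ is principal. The applicable tool from the paper is \Cref{lemNotCM}, which is tailored to principal $Q$ together with a regular sequence $f,g,h\in S$.
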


Note that our theorem also addresses \cite[Remark 5.3]{ADT} negatively. 
Our result shows that Hochster's theorem fails to hold in the non-Noetherian case 
if one uses any of the notions of Cohen-Macaulayness mentioned thus far.
Considering this, it is natural to ask the following question. 
\begin{question}
Is there a notion of Cohen-Macaulayness satisfying [G1]-[G3] such that Hochster's theorem holds?
\end{question}

Our second result concerns the only if part of condition [H1]; that is, the question of $R[x]$ is Cohen-Macaulay if $R$ is Cohen-Macaulay. 
We have already mentioned that Hamilton and Marley \cite{HM} showed the converse. 
We answer the only if part of [H1] for the class of valuation domains (or Pr{\" u}fer domains) of finite Krull dimension.
Note that even though a valuation domain is always quasi-local, its Krull dimension can be infinite (in the Noetherian case, the Krull dimension of a local ring is finite).
We believe that this might be known to the experts, but to the best of our knowledge, 
 the statement and proof is not available. 

\begin{theorem}[{\Cref{thmV[x]}}]
Let $R$ be a Pr{\" u}fer domain of finite Krull dimension, then $R[x_1,\dots, x_n]$ is (locally) Cohen-Macaulay. In particular, $V[x_1,\dots,x_n]$ is Cohen-Macaulay if $V$ is a valuation domain.
\end{theorem}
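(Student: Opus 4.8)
The plan is to prove Cohen-Macaulayness by establishing it at every localization and then globalizing via the if-part of [H2], exploiting that a Pr\"ufer domain is semihereditary and hence behaves well under polynomial extensions and localizations. Throughout write $S = R[x_1,\dots,x_n]$.

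First I would record the coherence-theoretic properties of $S$. Since $R$ is a Pr\"ufer domain it is semihereditary, so it is stably coherent and has weak global dimension at most $1$. It follows that $S$ is coherent and that its weak global dimension is at most $n+1$; being coherent of finite weak global dimension, $S$ is a \emph{coherent regular} ring (over a coherent ring a finitely generated ideal is finitely presented, and finite flat dimension forces finite projective dimension for finitely presented modules, so every finitely generated ideal has finite projective dimension). Moreover $\dim S$ is finite: iterating Seidenberg's bound $\dim R[x] \le 2\dim R + 1$ gives $\dim S < \infty$ (indeed $\dim S = \dim R + n$, since a Pr\"ufer domain of finite dimension is a Jaffard domain).

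Next I would fix an arbitrary prime $\mathfrak{q}$ of $S$ and show that $S_{\mathfrak{q}}$ inherits all three properties. Coherence descends under localization: every finitely generated ideal of $S_{\mathfrak{q}}$ is the extension $J S_{\mathfrak{q}}$ of a finitely generated ideal $J \subseteq S$, and since $J$ is finitely presented and localization is flat, $J S_{\mathfrak{q}}$ is finitely presented, so $S_{\mathfrak{q}}$ is coherent. Weak global dimension does not increase under localization, so $S_{\mathfrak{q}}$ still has finite weak global dimension and is therefore coherent regular; and $\dim S_{\mathfrak{q}} \le \dim S < \infty$. Hence $S_{\mathfrak{q}}$ is a local coherent regular ring of finite Krull dimension.

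With this in place the theorem follows from the Hamilton--Marley machinery: by \cite[Theorem 4.8]{HM} (the local form of [G2]) each $S_{\mathfrak{q}}$ is Cohen-Macaulay, which already gives that $S$ is \emph{locally} Cohen-Macaulay; and since this holds for every prime $\mathfrak{q}$, the if-part of [H2] in \cite[Proposition 4.7]{HM} yields that $S$ itself is Cohen-Macaulay. The statement for a valuation domain $V$ is the special case $R = V$, as every valuation domain is a Pr\"ufer domain. The genuinely delicate step is the second one: I must verify carefully that coherence and finiteness of weak global dimension really descend to $S_{\mathfrak{q}}$ and that Theorem~4.8 applies with no hidden Noetherian or finite-dimensionality gap, which is why tracking $\dim S < \infty$ throughout is essential; the first step leans on the standard but nontrivial facts that semihereditary rings are stably coherent and that Pr\"ufer domains of finite dimension satisfy the polynomial dimension formula.
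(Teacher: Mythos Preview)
Your proof is correct and follows essentially the same route as the paper: establish that $S=R[x_1,\dots,x_n]$ is coherent with finite weak dimension (hence coherent regular), then invoke \cite[Theorems~4.7 and~4.8]{HM} to pass from local coherent regularity to global Cohen--Macaulayness. Your version is slightly more explicit about the localization step and about tracking $\dim S<\infty$, and you phrase the coherence input as ``semihereditary $\Rightarrow$ stably coherent'' rather than citing Sabbagh directly, but these are the same facts.
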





\section{Cohen-Macaulayness in the sense of Hamilton and Marley}
In this section, we review the definition of a Cohen-Macaulay ring as in \cite{HM} and some preliminary results. \\

Let $R$ be a ring and $\underline{x} = x_{1},\ldots, x_{\ell}$ a sequence of elements of $R$. 
For any $n \in \mathbb{N}$, let $\underline{x}^{n}$ denote the sequence $x^{n}_{1},\ldots, x_{\ell}^{n}$. By $\mathbb{K}_{\bullet}(\underline{x})$, we mean the Koszul complex with respect to $\underline{x}$. 
For any $n \geq m$, there is a map of chain complexes 
\[ \varphi_{m}^{n} \colon \mathbb{K}_{\bullet}(\underline{x}^{n}) \to \mathbb{K}_{\bullet}(\underline{x}^{m}) \] 
induced by multiplication of $(x_{1} x_{2} \cdots x_{\ell})^{n-m}$ on $R$.  
A sequence of elements $\underline{x} = x_{1},\ldots, x_{\ell}$ of a ring $R$ is said to be \emph{weakly proregular}, 
if for each $m > 0$, there is an $n \geq m$ so that the maps \[ H_{i}( \varphi_{m}^{n}) \colon H_{i}(\mathbb{K}_{\bullet}(\underline{x}^{n})) \to H_{i}(\mathbb{K}_{\bullet}(\underline{x}^{m})) \] are $0$ for all $i \geq 1$. 
For $M$ an $R$-module,  
define the \emph{\u Cech complex} to be the complex \[ {\C}^{\bullet}_{\underline{x}}(M): 0 \to M \to \bigoplus_{i} M_{x_{i}} \to \bigoplus_{i < j} M_{x_{i}x_{j}} \to \ldots \to M_{x_{1}\cdots x_{\ell}} \to 0, \]
where the maps are natural up to a sign convention.  For each $i \in \mathbb{Z}$, the $i^{th}$ \emph{\u Cech cohomology} module of $M$ with respect to the sequence $\underline{x}$ is  $H^{i}_{\underline{x}}(M) := H^{i}({\C}^{\bullet}_{\underline{x}}(M))$. 
Schenzel \cite[Theorem 1.1]{weaklyproregular} showed that weakly proregular sequences $\underline{x}$ in a ring $R$ are precisely the sequences, where for $I = (\underline{x})R$, \u Cech cohomology $H_{\underline{x}}^{\bullet}(-)$ and local cohomology $H_{I}^{\bullet}(-) := \varinjlim_{n} \text{Ext}_{R}^{
 \bullet}(R/I^{n},-)$ are canonically isomorphic functors. 
 These two functors are isomorphic in the Noetherian case, but they are not isomorphic in general, cf. \cite[paragraph after Theorem 2.3]{HM}.
For any sequence of elements $\underline{x} = x_1,\dots, x_s$, we define $\ell (\underline{x})$ to be the length of the sequence $\underline{x}$.


\begin{definition}[{\cite[Definition 3.1]{HM}}]\label{defPar}
Let $R$ be a ring. 
A sequence of elements $\underline{x} = x_{1},\ldots, x_{\ell}$ is said to a \emph{parameter sequence} on $R$ if the following conditions hold:
\begin{enumerate}
\item $(\underline{x})R \neq R.$

\item $\underline{x} \text{ is weakly proregular}$. 

\item $H_{\underline{x}}^{\ell}(R)_{\mathfrak{p}} \neq 0 \text{ for each }\mathfrak{p} \in V_R(\underline{x}R).$
\end{enumerate}
Here, $V_R(I)$ denotes the set of prime ideals of $R$ containing the ideal $I$, and for a sequence of elements $f_1,\dots, f_s \in R$,  we define $V_R(f_1,\dots, f_s) := V_R ( (f_1,\dots, f_x)R)$.
Moreover, we say $\underline{x}$ is a \emph{strong parameter sequence} on $R$ if $x_{1},\ldots, x_{i}$ is a parameter sequence on $R$ for all $1 \leq i \leq \ell$.  
\end{definition}

Note that there is an example \cite[Proposition 2.9]{HM} of a parameter sequence that is not a strong parameter sequence. The next proposition states that a strong parameter sequence is part of a system of parameters if the ring is local and Noetherian.

\begin{prop}[{\cite[Proposition 3.6]{HM}}]\label{propSopHt}
Let $R$ be a ring, and let $\underline{x}$ be a parameter sequence of $R$. Then $\height (\underline x)R \ge \ell(\underline{x})$. 
\end{prop}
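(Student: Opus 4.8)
The plan is to reduce the global height bound to a statement about a single minimal prime and then extract it from the nonvanishing of top \u Cech cohomology. Since $\height (\underline{x})R = \min\{\height \p : \p \in \Min((\underline{x})R)\}$, it suffices to show $\height \p \ge \ell$ for every prime $\p$ minimal over $(\underline{x})R$. Fix such a $\p$ and localize at it. The \u Cech complex of $R$ on $\underline{x}$ is assembled from the localizations $R_{x_{i_1}\cdots x_{i_k}}$, and since localizations commute and localization is exact, $H^{\ell}_{\underline{x}}(R)_{\p} \cong H^{\ell}_{\underline{x}}(R_{\p})$; by condition (3) of \Cref{defPar} this module is nonzero. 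Moreover, because $\p$ is minimal over $(\underline{x})R$, the image of $\underline{x}$ in $R_{\p}$ generates an ideal whose radical is the maximal ideal $\p R_{\p}$, so we are reduced to a purely local assertion.

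The local assertion I would prove is: if $(A,\m)$ is quasi-local, $\underline{x}$ has length $\ell$ with $\sqrt{(\underline{x})A} = \m$, and $H^{\ell}_{\underline{x}}(A) \ne 0$, then $\dim A \ge \ell$ (which for $A = R_{\p}$ reads $\height \p = \dim R_{\p} \ge \ell$). I would argue by contradiction: suppose $\dim A = h < \ell$. The goal is then to force the top \u Cech cohomology to vanish, contradicting condition (3). The decisive ingredient is a Grothendieck-type vanishing statement, namely that over a ring of finite Krull dimension $h$ the \u Cech cohomology $H^{i}_{\underline{x}}(-)$ is zero for $i > h$. Granting it, $\ell > h$ gives $H^{\ell}_{\underline{x}}(A) = 0$, the desired contradiction.

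The heart of the matter, and the main obstacle, is establishing this vanishing without Noetherian hypotheses, where Grothendieck's classical theorem is unavailable. I see two routes. The first is to pass through local cohomology: the defining isomorphisms $H_{i}(\varphi^{n}_{m}) = 0$ survive the exact functor of localization, so $\underline{x}$ stays weakly proregular in $A$, and Schenzel's theorem then gives $H^{\ell}_{\underline{x}}(A) \cong H^{\ell}_{(\underline{x})A}(A)$; since local cohomology depends only on $\sqrt{(\underline{x})A} = \m$, one may feed this into a dimension bound for local cohomology. The second, more hands-on route uses the identification $H^{\ell}_{\underline{x}}(A) \cong \varinjlim_{n} A/(x_{1}^{n},\ldots,x_{\ell}^{n})$ with transition maps multiplication by $x_{1}\cdots x_{\ell}$, and proves directly that once $\ell$ exceeds $\dim A$ the class of $1$ dies in the limit, i.e.\ $(x_{1}\cdots x_{\ell})^{k} \in (x_{1}^{k+1},\ldots,x_{\ell}^{k+1})$ for some $k$. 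Pinning down this vanishing at the level of Krull dimension in the non-Noetherian world is the delicate point on which the argument rests.
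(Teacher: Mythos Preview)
The paper does not supply its own proof of this proposition; it is quoted verbatim from \cite[Proposition~3.6]{HM}, so there is no in-paper argument to compare against. I will therefore comment only on the gap in your argument.

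Your reduction is correct: localize at a minimal prime $\p$ of $(\underline{x})R$, observe $H^{\ell}_{\underline{x}}(R_{\p})\neq 0$ by condition~(3) of \Cref{defPar}, and derive a contradiction from $\height\p<\ell$. The gap you flag---a Grothendieck-type vanishing above the Krull dimension---is real, but it closes for \u Cech cohomology by a short induction on $\ell$, and \emph{neither} weak proregularity \emph{nor} Schenzel's theorem is needed; your first route through derived-functor local cohomology is a detour. The precise statement is: for any ring $A$, any $x_{1},\dots,x_{\ell}\in A$, and any prime $\p\supseteq(x_{1},\dots,x_{\ell})$ with $\height\p<\ell$, one has $H^{\ell}_{\underline{x}}(A_{\p})=0$. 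For $\ell=1$ the prime $\p$ is minimal, so $x_{1}$ is nilpotent in $A_{\p}$ and $(A_{\p})_{x_{1}}=0$. For $\ell>1$, the factorization ${\C}^{\bullet}_{\underline{x}}={\C}^{\bullet}_{x_{1},\dots,x_{\ell-1}}\otimes{\C}^{\bullet}_{x_{\ell}}$ yields an exact sequence
\[
H^{\ell-1}_{x_{1},\dots,x_{\ell-1}}\bigl((A_{\p})_{x_{\ell}}\bigr)\longrightarrow H^{\ell}_{\underline{x}}(A_{\p})\longrightarrow H^{\ell}_{x_{1},\dots,x_{\ell-1}}(A_{\p})=0,
\]
the last term vanishing because the \u Cech complex on $\ell-1$ elements sits in degrees $0,\dots,\ell-1$. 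To kill the left term, note that it is supported on those primes $\mathfrak{q}$ of $(A_{\p})_{x_{\ell}}$ containing $x_{1},\dots,x_{\ell-1}$; any such $\mathfrak{q}$ satisfies $\mathfrak{q}\subsetneq\p$ (as $x_{\ell}\in\p\setminus\mathfrak{q}$), whence $\height\mathfrak{q}\le\height\p-1<\ell-1$, and the inductive hypothesis gives $H^{\ell-1}_{x_{1},\dots,x_{\ell-1}}(A_{\mathfrak{q}})=0$. Thus the left term vanishes locally, hence globally, and the argument is complete. Your second route, aiming to exhibit $(x_{1}\cdots x_{\ell})^{k}\in(x_{1}^{k+1},\dots,x_{\ell}^{k+1})$ by hand, is equivalent in the end but much harder to organize directly; the inductive use of the long exact sequence is the missing idea.
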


\begin{definition}[\cite{HM}]
We say that a ring $R$ is \emph{Cohen-Macaulay} if every strong parameter sequence on $R$ is a regular sequence on $R$.
\end{definition}

Hamilton-Marley showed the following characterizations of their notion of Cohen-Macaulayness. 
\begin{theorem}[{\cite[Proposition 4.2]{HM}}]\label{charHM}
Let $R$ be a ring. The following conditions are equivalent:
\begin{enumerate}[\indent\;$(1)$]
\item $R$ is Cohen-Macaulay.
\item $\grade (\underline{x}) R = \ell(\underline{x})$ for every strong parameter sequence $\underline{x}$ of R.
\item $\pgrade (\underline{x})R = \ell(\underline{x})$ for every strong parameter sequence $\underline{x}$ of R.
\item $H_{\underline{x}}^i (R) = 0$ for all $i < \ell(\underline{x})$ and for every strong parameter sequence $\underline{x}$ of $R$.
\item $H_i (\underline{x}; R) = 0$ for all $i \ge 1$ and for every strong parameter sequence $\underline{x}$ of $R$. Here, $H_i (\underline{x}; R)$ denotes the Koszul homology module with respect to the sequence $\underline{x}$. 
\end{enumerate}
\end{theorem}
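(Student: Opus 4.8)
\emph{Proof plan.} The plan is to establish the cycle of implications $(1)\Rightarrow(2)\Rightarrow(3)\Rightarrow(5)\Rightarrow(1)$ together with the separate equivalence $(3)\Leftrightarrow(4)$, working throughout with a fixed strong parameter sequence $\underline{x}=x_1,\dots,x_\ell$ of length $\ell=\ell(\underline{x})$ and only passing to the universally quantified statements at the very end. The two computational inputs I would rely on are the standard Koszul- and \u Cech-theoretic descriptions of polynomial grade for the finitely generated ideal $I=(\underline{x})R$ (see, e.g., \cite{Hoc74,HM}), namely
\[
\pgrade I = \ell-\sup\{\,i : H_i(\underline{x};R)\neq 0\,\}
\qquad\text{and}\qquad
\pgrade I = \inf\{\,i : H^i_{\underline{x}}(R)\neq 0\,\}.
\]
Since $H_0(\underline{x};R)=R/I\neq 0$ because $I$ is proper, the first formula gives in particular $\pgrade I\le \ell$; combined with the inequality $\grade I\le \pgrade I$ recalled in the introduction, this yields the standing bound $\grade I\le \pgrade I\le \ell$.

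With these formulas in hand, the equivalences among $(3),(4),(5)$ are immediate for the fixed $\underline{x}$. Condition $(5)$, that $H_i(\underline{x};R)=0$ for all $i\ge 1$, says precisely that $\sup\{i:H_i(\underline{x};R)\neq 0\}=0$, which by the Koszul formula is equivalent to $\pgrade I=\ell$, i.e.\ to $(3)$. For $(3)\Leftrightarrow(4)$ I would use the \u Cech formula together with the third defining property of a parameter sequence: since $H^\ell_{\underline{x}}(R)_{\p}\neq 0$ for each $\p\in V_R(\underline{x}R)$ we have $H^\ell_{\underline{x}}(R)\neq 0$, so $\inf\{i:H^i_{\underline{x}}(R)\neq 0\}=\ell$ is equivalent to the vanishing $H^i_{\underline{x}}(R)=0$ for all $i<\ell$, which is $(4)$, and $\inf=\ell$ is $\pgrade I=\ell$, which is $(3)$. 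The two opening implications are then quick: if $\underline{x}$ is a regular sequence contained in the proper ideal $I$, then $I$ contains a regular sequence of length $\ell$, so $\grade I\ge \ell$, whence $\grade I=\ell$ by the bound above, giving $(2)$; and $(2)$ together with $\grade I\le\pgrade I\le \ell$ forces $\pgrade I=\ell$, giving $(3)$.

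The substantive step, and the one I expect to be the main obstacle, is $(5)\Rightarrow(1)$: from the vanishing of positive Koszul homology for \emph{every} strong parameter sequence, I must actually produce a regular sequence. I would argue by induction on $\ell$, using the structural fact that every initial segment $x_1,\dots,x_i$ of a strong parameter sequence is again a strong parameter sequence, so that hypothesis $(5)$ applies to all of them. The base case $\ell=1$ is clear, since $H_1(x_1;R)=(0:_R x_1)=0$ makes $x_1$ a nonzerodivisor and $(x_1)R\neq R$ makes it a regular sequence. For the inductive step, put $\underline{x}'=x_1,\dots,x_{\ell-1}$; this is a strong parameter sequence, so $(5)$ gives $H_i(\underline{x}';R)=0$ for $i\ge 1$, and by induction $\underline{x}'$ is a regular sequence. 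Writing the long exact sequence of Koszul homology associated to $\mathbb{K}_{\bullet}(\underline{x})\cong \mathbb{K}_{\bullet}(\underline{x}')\otimes_R \mathbb{K}_{\bullet}(x_\ell)$, whose connecting maps are multiplication by $\pm x_\ell$, and using $H_1(\underline{x}';R)=0$, I would identify $H_1(\underline{x};R)$ with the kernel of multiplication by $x_\ell$ on $H_0(\underline{x}';R)=R/(\underline{x}')R$, that is, with $(0:_{R/(\underline{x}')R} x_\ell)$. Since $(5)$ gives $H_1(\underline{x};R)=0$, the element $x_\ell$ is a nonzerodivisor modulo $(\underline{x}')R$; as $(\underline{x})R\neq R$, the full sequence $\underline{x}$ is regular, completing the induction.

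Assembling the pieces, $(1)\Rightarrow(2)\Rightarrow(3)\Rightarrow(5)\Rightarrow(1)$ shows that $(1),(2),(3),(5)$ are equivalent, and $(3)\Leftrightarrow(4)$ finishes the proof. The points requiring care are the non-Noetherian validity of the Koszul and \u Cech grade formulas—which is exactly the reason polynomial grade rather than ordinary grade appears in $(3)$—and the verification that the connecting homomorphisms in the Koszul long exact sequence are genuinely multiplication by $\pm x_\ell$, so that the identification of $H_1(\underline{x};R)$ with the relevant colon ideal is legitimate. I expect the inductive argument for $(5)\Rightarrow(1)$ to carry the real weight, as it is the place where the hypothesis that $\underline{x}$ is a \emph{strong} parameter sequence, and not merely a parameter sequence, is used essentially.
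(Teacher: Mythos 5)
The paper itself contains no proof of this statement: it is imported verbatim from Hamilton--Marley as \cite[Proposition 4.2]{HM}, so there is no in-paper argument to compare against, and your proposal must be judged as a reconstruction of the cited source. So judged, it is correct, and it follows essentially the original route. The two formulas you take as input, $\pgrade (\underline{x})R = \ell(\underline{x}) - \sup\{i : H_i(\underline{x};R)\neq 0\}$ and $\pgrade (\underline{x})R = \inf\{i : H^i_{\underline{x}}(R)\neq 0\}$, are exactly the non-Noetherian p-grade characterizations of \cite[Prop.\ 2.7]{HM} (the present paper itself invokes them, e.g.\ at the end of the proof of \Cref{lemNotCM}), and with them your equivalences among $(2)$--$(5)$ go through as written; note only that your use of $H^{\ell}_{\underline{x}}(R)\neq 0$ requires $V_R(\underline{x}R)\neq\emptyset$, which holds since condition $(1)$ of \Cref{defPar} makes $(\underline{x})R$ proper. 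You also correctly isolated the one genuinely delicate point: in an arbitrary ring, the vanishing $H_i(\underline{x};R)=0$ for all $i\geq 1$ (Koszul-regularity) does \emph{not} by itself imply that $\underline{x}$ is a regular sequence, so $(5)\Rightarrow(1)$ cannot be done one sequence at a time; your induction on initial segments is legitimate precisely because initial segments of strong parameter sequences are again strong parameter sequences by definition, so the universally quantified hypothesis $(5)$ applies to each $x_1,\dots,x_i$, and the long exact sequence of Koszul homology with connecting maps $\pm x_\ell$ then identifies $H_1(\underline{x};R)$ with $\bigl(0:_{R/(\underline{x}')R} x_\ell\bigr)$ once $H_1(\underline{x}';R)=0$. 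This is the same mechanism Hamilton and Marley use, so your proof is a faithful and complete reconstruction rather than a new argument.
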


\begin{remark}[{\cite[Proposition 3.3(f)]{HM}}] \label{regimpliesparameter} In any ring, every regular sequence on $R$ is a parameter sequence on $R$.
\end{remark}


Lastly, we record some results on local cohomology modules which we will use in the following sections. For the proofs of the following results and basic facts on local cohomology, we refer the read to \cite{BS,24HR}.

\begin{prop}\label{propPre} Let $R$ be a Noetherian ring, and let $I,J$ be $R$-ideals.
\begin{enumerate}[$(a)$]
\item {\textup{[Mayer-Viertories sequence]}}  One has the following long exact sequence of local cohomology modules
\begin{align*}
0 &\to H^0_{I+J} (R) \to H^0_I (R) \oplus H^0_J (R) \to H^0_{I \cap J} (R) \\
&\to H^1_{I+J} (R) \to H^1_I (R) \oplus H^1_J (R) \to H^1_{I \cap J} (R) \\
&\to H^2_{I+J} (R) \to \cdots.
\end{align*}

\item $\grade (I,R) = \inf \{ i \mid H^i_I(R) \neq 0 \}$. In particular, $H^i_I(R) = 0$ for all $i < \grade (I,R)$. 

\item If $R$ is local and $I$ is an ideal generated by a system of parameters of $R$, then $H^{\dim R}_I (R) \neq 0$.

\end{enumerate}
\end{prop}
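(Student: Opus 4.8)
These three statements are classical facts about local cohomology over a Noetherian ring, so I would prove them by assembling standard homological tools; let me indicate the route for each and flag where the real content sits.

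For (a) the plan is to realize the sequence as the long exact cohomology sequence of a short exact sequence of complexes. Write $\Gamma_I(-)=H^0_I(-)$ for the $I$-torsion functor and record the two elementary facts that $\Gamma_{I+J}=\Gamma_I\cap\Gamma_J$ (an element killed by a power of $I+J$ is killed by powers of $I$ and of $J$, and conversely by a pigeonhole on the expansion of $(I+J)^N$) and that the natural difference map $\Gamma_I\oplus\Gamma_J\to\Gamma_{I\cap J}$, $(a,b)\mapsto a-b$, has kernel $\Gamma_{I+J}$. This left-exact sequence is \emph{not} right exact on arbitrary modules, so I would not apply it to $R$ directly but to a chosen injective resolution $E^\bullet$ of $R$. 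The key point is that the sequence becomes exact on an injective module: since $R$ is Noetherian, every injective is a direct sum of copies of $E(R/\p)$, and on each summand $\Gamma_I(E(R/\p))=E(R/\p)$ or $0$ according as $\p\supseteq I$ or not; checking the four cases (each of $I,J$ contained in $\p$ or not) shows
\[
0 \to \Gamma_{I+J}(E) \to \Gamma_I(E)\oplus\Gamma_J(E)\to \Gamma_{I\cap J}(E)\to 0
\]
is exact for every injective $E$, where one uses $I\cap J\subseteq\p \iff I\subseteq\p \text{ or } J\subseteq\p$ for $\p$ prime. Applying this degreewise to $E^\bullet$ produces a short exact sequence of complexes whose three cohomologies are $H^\bullet_{I+J}(R)$, $H^\bullet_I(R)\oplus H^\bullet_J(R)$ and $H^\bullet_{I\cap J}(R)$, and its induced long exact sequence is exactly the asserted Mayer--Vietoris sequence, the connecting maps coming for free from the snake lemma. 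The only obstacle here is the bookkeeping: getting degreewise exactness on injectives via the structure theorem, after which the long exact sequence is automatic.

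For (b) I would argue by induction on $g=\grade(I,R)$ that $H^i_I(R)=0$ for $i<g$ and $H^g_I(R)\neq 0$. When $g=0$, $I$ consists of zerodivisors, so by prime avoidance $I$ lies in an associated prime $\operatorname{ann}(x)$; then $x\in\Gamma_I(R)=H^0_I(R)$, so $H^0_I(R)\neq 0$. When $g>0$, choose a nonzerodivisor $x\in I$, so $\Gamma_I(R)=0$ and $\grade(I,R/xR)=g-1$. The short exact sequence $0\to R\xrightarrow{\,x\,}R\to R/xR\to 0$ gives
\[
\cdots \to H^{i-1}_I(R/xR)\to H^i_I(R)\xrightarrow{\,x\,}H^i_I(R)\to H^i_I(R/xR)\to\cdots.
\]
For $i<g$ the inductive hypothesis gives $H^{i-1}_I(R/xR)=0$, so multiplication by $x$ is injective on $H^i_I(R)$; but $H^i_I(R)$ is $I$-torsion and $x\in I$, so every element is killed by a power of $x$, forcing $H^i_I(R)=0$. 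For $i=g$, the vanishing $H^{g-1}_I(R)=0$ makes $H^{g-1}_I(R/xR)\to H^g_I(R)$ injective, and since $H^{g-1}_I(R/xR)\neq 0$ by induction, $H^g_I(R)\neq 0$. This yields both the equality and the ``in particular'' vanishing.

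For (c), since $x_1,\dots,x_d$ is a system of parameters of the $d$-dimensional local ring $R$, the ideal $I$ is $\m$-primary, i.e.\ $\sqrt{I}=\m$; as local cohomology depends only on the radical of the defining ideal, $H^i_I(R)\cong H^i_{\m}(R)$ for all $i$, and in particular $H^d_I(R)\cong H^d_{\m}(R)$. It remains to invoke Grothendieck's non-vanishing theorem $H^{\dim R}_{\m}(R)\neq 0$, whose proof is an induction on $d$: reduce modulo a parameter $x$ with $\dim(R/xR)=d-1$ and use the long exact sequence in local cohomology together with the inductive non-vanishing $H^{d-1}_{\m}(R/xR)\neq 0$, the base case $d=0$ being immediate since then $R$ is $\m$-torsion and $H^0_{\m}(R)=R\neq 0$. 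If Grothendieck non-vanishing may be cited as known, (c) is immediate from the radical-independence reduction; otherwise that induction is the one genuinely substantive input among the three parts, the rest being standard homological bookkeeping.
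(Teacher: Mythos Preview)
Your arguments are correct and are precisely the standard textbook proofs. In the paper itself, however, this proposition is not proved at all: it is stated as a collection of well-known facts, and the reader is simply referred to \cite{BS,24HR} for proofs. So there is nothing to compare at the level of strategy; you have filled in exactly what the paper chose to cite.

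Two minor remarks on your write-up. In (b), when you pass from $R$ to $R/xR$ you are implicitly using that $H^i_I(R/xR)$ computed over $R$ coincides with $H^i_{I(R/xR)}(R/xR)$ computed over $R/xR$; this is routine (independence of base), but worth a word since the induction is on the pair $(\text{ring},\text{ideal})$. In (c), your sketched induction for Grothendieck non-vanishing glosses over the case where every parameter is a zerodivisor, so the short exact sequence $0\to R\xrightarrow{x}R\to R/xR\to 0$ is unavailable; the usual fix is to first kill $H^0_{\mathfrak m}(R)$ or to pass to the completion. You already flag this as the one substantive input and suggest citing it, which is exactly what the paper does.
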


\section{Cohen-Macaulayness of $k + Q$ rings}
In this section, we will study the Cohen-Macaulayness of rings of type $k + Q$, where $k$ is a field and $Q$ is an ideal in a Noetherian ring. 
We first state a lemma which will be useful in our setup. 

\begin{lem}\label{lem5terms}
Let $R \subset S$ be a ring extension, and suppose $\underline{f} = f_{1}, \ldots, f_{d} \in (R :_{S} S)$. Then the following hold: 
\begin{enumerate}[$(a)$]
\item We have an exact sequence of {\u C}ech cohomology modules
\begin{equation}\label{eq5terms}
0 \to H^0_{\underline{f}} (R) \to H^0_{\underline{f}} (S) \to H^0_{\underline{f}} (S/R) \to H^1_{\underline{f}} (R) \to H^1_{\underline{f}} (S) \to 0, 
\end{equation}
and $H^{i}_{\underline{f}}(R) = H^{i}_{\underline{f}}(S)$ for each $i \geq 2$. 
\item $\underline{f}$ is a weakly proregular sequence in $R$ if and only if $\underline{f}$ is a weakly proregular sequence in $S$. 
\end{enumerate}
\end{lem}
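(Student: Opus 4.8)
My plan rests on a single observation about the conductor. Since each $f_i \in (R :_S S)$ we have $f_i S \subseteq R$; in particular $f_i = f_i \cdot 1 \in R$, so the $f_i$ genuinely lie in $R$, and each $f_i$ annihilates $S/R$. Moreover, for any nonempty product $g = f_{i_1} \cdots f_{i_k}$ (which again lies in the ideal $(R:_S S)$, since it absorbs the extra factors), the localization map $R_g \to S_g$ is an isomorphism: it is injective because $R \hookrightarrow S$ and localization is exact, and it is surjective because any $s/g^n$ equals $(gs)/g^{n+1}$ with $gs \in R$. Equivalently, $(S/R)_g = 0$ for every such $g$. This is the engine driving both parts.

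For part $(a)$, I would apply the {\u C}ech complex functor to the short exact sequence $0 \to R \to S \to S/R \to 0$. Because localization is exact, ${\C}^{\bullet}_{\underline{f}}(-)$ is exact on this sequence and produces a short exact sequence of complexes
\[ 0 \to {\C}^{\bullet}_{\underline{f}}(R) \to {\C}^{\bullet}_{\underline{f}}(S) \to {\C}^{\bullet}_{\underline{f}}(S/R) \to 0. \]
By the observation above, every term of ${\C}^{\bullet}_{\underline{f}}(S/R)$ in cohomological degree $\geq 1$ is a localization $(S/R)_g$ at a nonempty product $g$, hence vanishes; so this complex is concentrated in degree $0$ with value $S/R$. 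Thus $H^0_{\underline{f}}(S/R) = S/R$ and $H^i_{\underline{f}}(S/R) = 0$ for $i \geq 1$. Feeding this into the associated long exact sequence of {\u C}ech cohomology immediately yields the five-term exact sequence \eqref{eq5terms} and the isomorphisms $H^i_{\underline{f}}(R) \cong H^i_{\underline{f}}(S)$ for $i \geq 2$ (indeed the two {\u C}ech complexes already agree termwise in positive degrees).

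For part $(b)$, I would pass to Koszul homology and argue at the level of inverse systems in $n$. The Koszul complex $\mathbb{K}_{\bullet}(\underline{f}^{n})$ is a bounded complex of free $R$-modules, so tensoring $0 \to R \to S \to S/R \to 0$ with it produces, for each $n$, a short exact sequence of complexes and hence a long exact sequence of Koszul homology, all compatible with the transition maps $\varphi_{m}^{n}$. The point is that each $f_j$ annihilates $S/R$, so the induced chain map $\varphi_{m}^{n} \otimes_R (S/R)$ vanishes in every homological degree $\geq 1$ once $n > m$, since each of its entries is a multiple of some $f_j^{\,n-m}$. Consequently the inverse system $\{H_i(\underline{f}^n; S/R)\}_n$ is pro-zero for every $i \geq 1$. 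Viewing the long exact sequences as an exact sequence of inverse systems, the pro-zero $S/R$-systems flanking the map $\{H_i(\underline{f}^n; R)\}_n \to \{H_i(\underline{f}^n; S)\}_n$ force it to be an isomorphism of pro-objects for each $i \geq 1$; thus one system is pro-zero if and only if the other is. Since weak proregularity is exactly the pro-vanishing of these systems for all $i \geq 1$, this gives the equivalence.

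The main obstacle is the bookkeeping in part $(b)$: I must work in the category of pro-modules, or equivalently with the Mittag--Leffler formulation ``for each $m$ there is $n \geq m$ with zero transition map'', and check that the long exact sequences genuinely assemble into an exact sequence of inverse systems compatible with the $\varphi_{m}^{n}$. Two points need care. First, the degree-$0$ transition maps on $\mathbb{K}_{\bullet}(\underline{f}^{n}; S/R)$ are the identity, so that system is pro-zero only in positive degrees, which is precisely the range weak proregularity constrains. Second, the definition demands a single $n$ that works for all $i \geq 1$ simultaneously; this is harmless because the Koszul complex has finite length, so one may take the maximum of the finitely many $n$'s produced degree by degree.
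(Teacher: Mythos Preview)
Your proposal is correct and follows essentially the same approach as the paper's own proof: both parts are driven by the short exact sequence $0 \to R \to S \to S/R \to 0$, with part $(a)$ using the long exact sequence in \u Cech cohomology together with $H^{i}_{\underline{f}}(S/R)=0$ for $i\geq 1$, and part $(b)$ using the induced long exact sequence of Koszul homologies together with the pro-vanishing of $\{H_i(\underline{f}^n;S/R)\}$ for $i\geq 1$. Your treatment of part $(b)$ is in fact slightly more careful than the paper's, which asserts a \emph{short} exact sequence of inverse systems of Koszul homology and then invokes \cite[Remark 2, p.\ 24]{Gro}; you instead work with the genuine long exact sequence and argue the pro-isomorphism directly, which is the cleaner justification.
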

\begin{proof}
The statement $(a)$ follows from the long exact exact sequence of {\u C}ech cohomology modules induced from the short exact sequence of $R$-modules $0 \to R \to S \to S/R \to 0$ 
and from the fact that $(\underline{f})(S/R) = 0$ implies $H^{i}_{\underline{f}}(S/R) = 0$ for $i \geq 1$.\\ 

For $(b)$, the short exact sequence $0 \to R \to S \to S/R \to 0$ yields a short exact sequence of inverse systems of Koszul homology modules for any $i \geq 0$: \[ 0 \to \{  H_{i}(\underline{f^{n}} ; R)\} \to \{ H_{i}(\underline{f^{n}} ; S)\}  \to \{ H_{i}(\underline{f^{n}} ; S/R)\}  \to 0. \]  

For $n \geq m$, the multiplication map $K_{\bullet}( \underline{f^{n}} ; S/R) \overset{ (f_{1} \cdots f_{d})^{n-m}}\to  K_{\bullet}( \underline{f^{n}} ; S/R)$ is identically zero, so that $\{ H_{i}(\underline{f^{n}} ; S/R)\}$ is pro-zero, i.e., $H_{i}(\underline{f^{n}} ; S/R) = 0$ for $i > 0$. 
Thus, by \cite[Remark 2, p. 24]{Gro}, 
for each $i \geq 0$, the inverse system  $\{  H_{i}(\underline{f^{n}} ; R)\}$ is pro-zero if and only if the inverse system $\{  H_{i}(\underline{f^{n}} ; S)\}$ is pro-zero.
\end{proof}

\noindent{\bf Setup:}
We will use the following setup for the rest of the section.  
Let $S$ be a domain which is essentially of finite type over a field $k$. 
That is, $S$ is a localization of a finitely generated $k$-algebra. 
In particular, $S$ is a Noetherian ring.
An $S$-ideal $Q$ gives rise to a sub $k$-algebra $R = k +Q \subset S$. 
We call $R$ the {\it amalgamated $k$-algebra} of the ring $S$ along the ideal $Q$\footnote{The names comes from \cite{dAFF}. According to their terminologies, $R$ is the amalgamated $k$-algebra of $S$ along $Q$ under the embedding $k \to S$.}.
We will consider the case where $Q$ is at most two-generated.

\begin{remark}
We present examples of Noetherian and non-Noetherian amalgamated $k$-algebras. 
Let $S = k[x,y]$. 
 For $Q = (x,y^2)S$, $R = k + Q = k[x,xy,y^2,y^3]$ is Noetherian.
 For $Q = (x)S$, $R = k + Q = k[x, xy, xy^2, xy^3, \dots]$ is not Noetherian.
\end{remark}

\begin{remark}
For any ring extension $A \subset B$ of domains having the same field of fractions $K$, we call $\mathfrak{c}_{B/A} := A:_K B$ the {\it conductor} of the extension. 
It it well-known that $\mathfrak{c}_{B/A} = A :_B B = A :_A B$, and $\mathfrak{c}$ is the largest $B$-ideal contained in $A$. 
Furthermore, any prime ideal $\p$ of $A$, $(\mathfrak{c}_{B/A})_\p \subset \mathfrak{c}_{B_\p/ A_\p}$.
With the setup above, if $R \neq S$, then $Q$ is the conductor of the extension $R \subset S$, i.e., $R :_R S = Q$. 
As a consequence, for any prime ideal $\p \neq Q$, $Q_\p = R_{\p} \subset \mathfrak{c}_{S_\p/ R_\p}$, so $R_\p = \mathfrak{c}_{S_\p/ R_\p}$, i.e., $R_\p = S_\p$. 
\end{remark}

\begin{prop}\label{propConstruction}
Let $R, S$, and $Q$ be as above. If $Q \neq S$, then we have the following. 
\begin{enumerate}[$(a)$]
\item $Q$ is a maximal ideal of $R$.

\item For any prime ideal $\p \in \Spec R \setminus \{ Q \}$, $R_\p$ is Noetherian. 

\item Assume that $S$ is Cohen-Macaulay. Then $R$ is Cohen-Macaulay in the sense of Hamilton-Marley if $R_Q$ is Cohen-Macaulay in the sense of Hamilton-Marley.
\end{enumerate}
\end{prop}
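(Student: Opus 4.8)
The plan is to treat the three parts separately, with $(a)$ and $(b)$ serving as structural preparation for the reduction in $(c)$.

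For part $(a)$, I would identify the quotient ring $R/Q$. Since $Q$ is an $S$-ideal and $R \subseteq S$, the set $Q$ is in particular an ideal of $R$, and writing each element of $R$ as $c + q$ with $c \in k$ and $q \in Q$ exhibits a surjection $k \to R/Q$. The hypothesis $Q \neq S$ forces $Q$ to contain no unit of $S$, so $k \cap Q = 0$ and this surjection is injective as well. Hence $R/Q \cong k$ is a field and $Q$ is a maximal ideal of $R$.

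For part $(b)$, I would exploit the conductor structure recorded in the preceding remark, namely that $Q = R :_R S$ is the conductor of $R \subseteq S$. Given a prime $\p \neq Q$, maximality of $Q$ from part $(a)$ gives $Q \not\subseteq \p$, so $Q_\p = R_\p$. Using that the conductor localizes into the conductor, $R_\p = Q_\p \subseteq \mathfrak{c}_{S_\p/R_\p} \subseteq R_\p$, which forces $S_\p \subseteq R_\p$ and therefore $R_\p = S_\p$. Since $S_\p$ is a localization of the Noetherian ring $S$, it is Noetherian, and so is $R_\p$.

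For part $(c)$, the strategy is to reduce the global Cohen-Macaulayness of $R$ to a local statement and then handle the two types of primes separately. The key tool will be the ``if'' direction of condition [H2]: $R$ is Cohen-Macaulay in the sense of Hamilton-Marley provided $R_\p$ is Cohen-Macaulay for every prime $\p$ (\cite[Proposition 4.7]{HM}). It therefore suffices to verify that each $R_\p$ is Cohen-Macaulay. For $\p = Q$ this is exactly the hypothesis. For $\p \neq Q$, part $(b)$ identifies $R_\p$ with the localization $S_\p$ of $S$; since $S$ is Cohen-Macaulay and localizations of Noetherian Cohen-Macaulay rings are Cohen-Macaulay, $R_\p = S_\p$ is a Noetherian Cohen-Macaulay local ring, hence Cohen-Macaulay in the sense of Hamilton-Marley by the agreement of the two notions in the Noetherian case, condition [G1] (\cite[Proposition 3.6, Proposition 4.2]{HM}). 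The main obstacle is this final part: it hinges on correctly invoking the local-global principle of Hamilton-Marley together with the identification $R_\p = S_\p$ from $(b)$, which propagates the good behavior of the Noetherian ring $S$ to every localization of $R$ away from the single possibly non-Noetherian point $Q$, leaving $R_Q$ as the only ring requiring genuinely non-Noetherian input.
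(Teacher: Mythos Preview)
Your proposal is correct and follows essentially the same approach as the paper: part $(a)$ via $R/Q \cong k$, part $(b)$ via the conductor identity $Q = R:_R S$ localizing to give $R_\p = S_\p$, and part $(c)$ via the local-to-global direction \cite[Proposition 4.7]{HM}. The paper's proof of $(c)$ is a one-line citation, so your expanded justification---checking $R_\p = S_\p$ is Noetherian Cohen-Macaulay for $\p \neq Q$ and invoking [G1]---simply spells out what the paper leaves implicit.
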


\begin{proof}
(a): Since $R/Q  \cong k$ is a field, $Q$ is a maximal ideal.\\ 

(b): 
If $\p \ne Q$, then $R_\p :_{R_\p} S_\p = (R:_R S)_\p = Q_\p = R_\p$. So, $R_\p = S_\p$ is Noetherian since $S$ is Noetherian.\\

(c): This follows from \cite[Proposition 4.7]{HM}.
\end{proof}

We note that we do not know if $R_Q$ is Cohen-Macaulay under the condition of $R$ and $S$ being Cohen-Macaulay. This is a special case of an implication of condition [H2]. 

\begin{question} Is $R_Q$ Cohen-Macaulay if $R$ and $S$ are Cohen-Macaulay?
\end{question}

We first treat the case where $Q$ is generated by two elements in $S$. 

\begin{theorem}\label{thmH2}
Let $S$ be a domain which is essentially of finite type over a field $k$, $Q$ an $S$-ideal, and $R = k + Q$. 
If $Q = (f,g) \subsetneq S$ for some nonzero elements $f,g$ of $S$, then 
$f,g$ is a strong parameter sequence in $R$ if and only if 
$H^{2}_{f,g}(S) \neq 0$. 
Furthermore, if $R$ is Cohen-Macaulay, then $H^{2}_{f,g}(S) = 0$.
\end{theorem}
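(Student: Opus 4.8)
The plan is to verify the conditions of \Cref{defPar} for the sequences $f$ and $f,g$ by transporting the \u{C}ech cohomology of $R$ to the Noetherian ring $S$ through \Cref{lem5terms}, and then to express the strong-parameter condition in terms of a single $S$-module. Throughout I work in the proper case $R \neq S$, which is the situation of interest and, as the final assertion already indicates, is genuinely needed; then $Q = R :_{S} S$ is the conductor, $Q$ is a maximal ideal of $R$ by \Cref{propConstruction}(a), and $R_{\p} = S_{\p}$ for every prime $\p \neq Q$.

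Since $f,g \in Q = R :_{S} S$, \Cref{lem5terms} applies to the sequences $f$ and $f,g$. Because $S$ is Noetherian, every sequence in $S$ is weakly proregular, so part $(b)$ gives condition $(2)$ for both $f$ and $f,g$ in $R$, while part $(a)$ gives $H^{2}_{f,g}(R) = H^{2}_{f,g}(S)$. Condition $(1)$ is clear from $(f)R \subseteq (f,g)R \subseteq Q \subsetneq R$. For the single element $f$, one has $H^{1}_{f}(R) = R_{f}/R$, and for any prime $\p \ni f$ the localization $(R_{f}/R)_{\p} = (R_{\p})_{f}/R_{\p}$ is nonzero because $f$ is a nonzero nonunit of the domain $R_{\p}$; hence $f$ satisfies condition $(3)$ and is always a parameter sequence. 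Consequently $f,g$ is a strong parameter sequence exactly when $f,g$ is a parameter sequence.

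The remaining condition $(3)$ for $f,g$ asks that $H^{2}_{f,g}(R)_{\p} \neq 0$ for every $\p \in V_{R}((f,g)R)$, so I first determine this locus. If some prime $\p \neq Q$ contained both $f$ and $g$, then $R_{\p} = S_{\p}$ together with $Q_{\p} = R_{\p}$ would force $(f,g)S_{\p} = QS_{\p} = S_{\p}$, contradicting $f,g \in \p R_{\p}$; hence $V_{R}((f,g)R) = \{Q\}$ and condition $(3)$ reduces to $H^{2}_{f,g}(R)_{Q} \neq 0$. Now $H^{2}_{f,g}(S) = H^{2}_{Q}(S)$ is $Q$-power torsion as an $S$-module, so, viewed over $R$ through $Q \subseteq R$ with $Q$ maximal, it is supported only at $Q$; therefore $H^{2}_{f,g}(R)_{Q} = H^{2}_{f,g}(S)_{Q}$ is nonzero precisely when $H^{2}_{f,g}(S) \neq 0$. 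This establishes that $f,g$ is a strong parameter sequence if and only if $H^{2}_{f,g}(S) \neq 0$.

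For the final statement, suppose $R$ is Cohen-Macaulay yet $H^{2}_{f,g}(S) \neq 0$. Then $f,g$ is a strong parameter sequence, hence a regular sequence on $R$. But choosing $s \in S \setminus R$ and setting $x := fs$, we have $x \in Q \subseteq R$ with $gx = f(gs) \in fR$, while $x \notin fR$ (else $s = x/f \in R$); thus $g$ is a zerodivisor on $R/fR$ and $f,g$ is not regular, a contradiction, so $H^{2}_{f,g}(S) = 0$. I expect the main obstacle to be the third paragraph: identifying $V_{R}((f,g)R) = \{Q\}$ from the conductor structure and then exploiting the $Q$-power torsion of $H^{2}_{Q}(S)$ to promote the pointwise condition at $Q$ to the global nonvanishing of $H^{2}_{f,g}(S)$. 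The final assertion, by contrast, rests on the simple observation that the conductor element $fs$ witnesses that $f,g$ can never be a regular sequence once $R \subsetneq S$.
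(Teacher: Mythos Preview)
Your proof is correct. For the first equivalence you follow the same route as the paper: weak proregularity via \Cref{lem5terms}(b), the identification $H^{2}_{f,g}(R)=H^{2}_{f,g}(S)$ via \Cref{lem5terms}(a), and the reduction of condition~(3) to the single prime $Q$; the only difference is that you argue $V_{R}((f,g)R)=\{Q\}$ directly from the conductor description $R_{\p}=S_{\p}$ for $\p\neq Q$, whereas the paper cites \cite[Prop.~2.6(2)]{dAFF}.

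The genuine divergence is in the ``furthermore'' clause. The paper derives a contradiction cohomologically: from the five-term sequence of \Cref{lem5terms}(a) one has $H^{0}_{f,g}(S)=0$ (grade~$>0$) and $H^{0}_{f,g}(S/R)=S/R\neq 0$, forcing $H^{1}_{f,g}(R)\neq 0$ and hence $f,g$ not regular by \Cref{propPre}(b). You instead exhibit an explicit witness: for $s\in S\setminus R$ the element $fs\in Q\subseteq R$ satisfies $g\cdot fs=f\cdot(gs)\in fR$ while $fs\notin fR$ (cancel $f$ in the domain $S$), so $g$ is a zerodivisor on $R/fR$. Your argument is more elementary and shows directly, without any cohomology, that $f,g$ can \emph{never} be a regular sequence on $R$ once $R\subsetneq S$ and $Q=(f,g)S$; the paper's argument, by contrast, packages the same obstruction as the nonvanishing of $H^{0}_{f,g}(S/R)$, which is perhaps more in keeping with the cohomological framework used elsewhere. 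Both hinge on the standing assumption $R\neq S$, which you make explicit and which the paper invokes at the same point.
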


\begin{proof}
As $f$ is a regular element of $R$, it is a parameter sequence. 
Observe that since $Q = (R :_{S} S)$ is the conductor ideal and $S$ is Noetherian, by \Cref{lem5terms}(b), $f,g$ is a weakly proregular sequence in $R$. 
Thus, to prove the first statement, we show that for $f,g$ to be a parameter sequence, it is equivalent to say $H^{2}_{ f,g}(R)_{\mathfrak{p}} \neq 0$ for each $\mathfrak{p} \in V_R( f,g)$.  
Note that $V_R(f,g) = \{ Q \}$ by \cite[Prop. 2.6(2)]{dAFF}.  
Thus, $H^{2}_{f,g}(R) \neq 0$ if and only if $(H^{2}_{f,g}(R))_Q \neq 0$. 
Then the equivalence is clear since $H^2_{f,g}(R) = H^2_{f,g}(S)$ by \Cref{lem5terms}(a).\\

Now, we show the second statement. 
Suppose by way of contradiction that $f,g$ is a strong parameter sequence on $R$. Since $R$ is Cohen-Macaulay, $f,g$ is a regular sequence on $R$.
Thus, $H^{1}_{f,g}(R) = 0$ by \Cref{propPre}(b).
By \Cref{lem5terms}(a), we have the exact sequence  
\begin{equation*} 
H^{0}_{f,g}(S) \to H^{0}_{f,g}(S/R) \to H^{1}_{f,g}(R). 
\end{equation*}
Since $\grade (f,g)S > 0$, $H^{0}_{f,g}(S) = 0$ by \Cref{propPre}(b), 
and since $R \neq S$ and $Q = R:_S S$, one has $H^{0}_{f,g}(S/R) \neq 0$. 
Therefore, $H^{1}_{f,g}(R) \neq 0$, but this is a contradiction. 
\end{proof}

We do not know if the converse of the last statement is true. It is natural to ask if it is true when $S$ is a polynomial ring. 

\begin{question}
Let $S = k[x_{1}, \ldots, x_{n}]$ be a polynomial ring over a field $k$, $Q$ an $S$-ideal, and $R = k + Q$. 
Let $Q = (f,g) \subsetneq S$ for some nonzero elements $f,g$ of $S$.
Is the ring $R$ Cohen-Macaulay if $H^2_{f,g} (S) = 0$? 
\end{question}

\begin{remark}
It is worth noting the contrapositive of \Cref{thmH2}. 
That is, if $H^2_{f,g} (S) \neq 0$, then $R$ is not Cohen-Macaulay. 
We will use this fact to give examples of non Cohen-Macaulay rings.

\end{remark}

In \Cref{lemChar} below, we give an ideal-theoretic characterization of the condition $H^{2}_{f,g}(S) \neq 0$, 
and this provides a quick way to verify the condition of $H^{2}_{f,g} (S) \neq 0$. 
The essential idea of this criterion appeared in \cite{HKM}, and the equivalence of $(a)$ and $(c)$ in \Cref{lemChar} is due to them, \cite[Prop. 2.6]{HKM}. 
For an $S$-ideal $I$, let $\ara I := \min \{ \ell \in \mathbb{Z}_{\ge 0} \mid \text{there exist } z_1,\dots,z_\ell \in S, \sqrt{z_1,\dots, z_\ell} = \sqrt{I} \}$ denote the \textit{arithematic rank} of the ideal $I$.

\begin{lemma}\label{lemChar}
Let $S$ be a Noetherian UFD and $f,g$ non-zero elements in S. Let $d = \gcd (f,g)$. 
Then we have 
\begin{equation*}
H^2_{f,g}(S) = H^2_{f,g}(S_d) = H^2_{\frac fd,\frac gd}(S_d).
\end{equation*} 
Furthermore, the following statements are equivalent:
\begin{enumerate}[\indent$(a)$]
\item $H^2_{f,g}(S) = 0$.
\item $\ara (f/d, g/d)S = 1$ or $d$ is in the radical of $(f/d,g/d)S$.
\item $(fg)^n \in (f^{n+1}, g^{n+1})$ for some positive integer $n$.
\end{enumerate}
In particular, $H^2_{f,g}(S) \neq 0$ iff $\ara (f/d, g/d)S = 2$ and  $(f/d,g/d)S_d \neq S_d$. 
\end{lemma}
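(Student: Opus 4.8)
The plan is to prove the three equalities first and then establish the chain of equivalences $(a)\Leftrightarrow(b)\Leftrightarrow(c)$, treating the final ``in particular'' as a direct logical consequence. For the equalities, I would exploit the fact that \u Cech cohomology only depends on the radical of the generating ideal together with the behavior of local cohomology under localization. Writing $f = d(f/d)$ and $g = d(g/d)$ in the UFD $S$, note that $\gcd(f/d,g/d)=1$, so $(f/d,g/d)$ has height $\geq 2$ wherever it is proper. The passage to $S_d$ is justified because inverting $d$ does not change the top \u Cech cohomology $H^2$ in a way that matters: more precisely I would use that $H^2_{f,g}(S)$ is computed by the two-step \u Cech complex $0 \to S \to S_f \oplus S_g \to S_{fg} \to 0$, and compare it with the analogous complex for $(f/d, g/d)$ and for the localized ring $S_d$. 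The cleanest route is to observe that $\sqrt{(f,g)S_d} = \sqrt{(f/d,g/d)S_d}$ since $d$ is a unit in $S_d$, giving the second equality, and that the first equality $H^2_{f,g}(S)=H^2_{f,g}(S_d)$ follows by a direct inspection of the cokernel of $S_f \oplus S_g \to S_{fg}$, noting that localizing at $d$ is redundant once $fg$ has already been inverted in the last term.

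Next I would establish $(a)\Leftrightarrow(c)$, which the authors attribute to \cite[Prop.~2.6]{HKM}. The key point is that $H^2_{f,g}(S)$ is the cokernel of the map $S_f \oplus S_g \to S_{fg}$ sending $(a/f^i, b/g^j)$ to the difference of their images; an element $1/(fg)^n$ (or more precisely the class representing the obstruction) is killed exactly when one can solve a congruence of the form $(fg)^n = \alpha f^{n+1} + \beta g^{n+1}$ in $S$, which is precisely condition $(c)$. I would phrase this carefully: $H^2=0$ means every element of $S_{fg}$ lies in the image, and by clearing denominators this reduces to membership $(fg)^n \in (f^{n+1}, g^{n+1})$ for some $n$. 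This is the concrete, computational heart of the argument.

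For $(a)\Leftrightarrow(b)$ I would use the reductions already proved, so that it suffices to analyze $H^2_{f/d,g/d}(S_d)$ where now $I := (f/d, g/d)S_d$ satisfies $\gcd(f/d,g/d)=1$. Write $f'=f/d$, $g'=g/d$. If $I S_d = S_d$ (equivalently $d \in \sqrt{(f',g')S}$, after translating back), then the \u Cech cohomology vanishes trivially, covering one branch of $(b)$. Otherwise $I$ is a proper ideal of the Noetherian ring $S_d$; here I would invoke that for a proper ideal, $H^2_I(S_d)=0$ is governed by the arithmetic rank, and specifically that $\ara(I)=1$ forces $H^2_I=0$ since $I$ is then (up to radical) generated by a single element, giving a \u Cech complex concentrated in degrees $0,1$. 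Conversely, if $\ara(I)=2$ and $I\neq S_d$, one must show $H^2_I \neq 0$; since $f',g'$ are coprime in the UFD, the ideal $(f',g')$ has height exactly $2$, and the grade/height coincide in the Cohen-Macaulay (indeed regular, locally) ring $S_d$, so by \Cref{propPre}(c) applied at a suitable prime, the top local cohomology is nonzero. I expect the main obstacle to be the case analysis in $(b)$, in particular pinning down exactly when $\ara(f',g')=1$ versus $2$ and ensuring that the two disjunctive clauses of $(b)$ together capture precisely the vanishing locus; the interplay between ``$d$ lies in the radical'' (a degenerate case where the localization becomes trivial) and the genuine arithmetic-rank-one case requires care to state without overlap or gap. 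The ``in particular'' statement then follows by negating $(a)$ and $(b)$ simultaneously.
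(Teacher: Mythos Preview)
Your plan matches the paper's proof essentially step for step: the same cokernel argument for the first equality (inverting $d$ is redundant once $f$ or $g$ is inverted), the same radical-invariance for the second, the same citation of \cite{HKM} for $(a)\Leftrightarrow(c)$, and the same case split on whether $(f',g')S_d$ is proper followed by a height-$2$ computation and \Cref{propPre}(c) at a minimal prime for $(a)\Leftrightarrow(b)$.

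One caution: you assert that $S_d$ is ``Cohen--Macaulay (indeed regular, locally)'', but the hypothesis is only that $S$ is a Noetherian UFD, and Noetherian UFDs need not be Cohen--Macaulay or regular. Fortunately this claim is unnecessary and the paper does not make it: once you know $(f',g')S_d$ is proper and has height exactly $2$ (which follows from coprimality in a UFD plus Krull's height theorem, with no Cohen--Macaulay input), you localize at a minimal prime $\mathfrak p$ of height $2$ and apply \Cref{propPre}(c), which requires only that $(S_d)_{\mathfrak p}$ be Noetherian local of dimension $2$ with $f',g'$ a system of parameters. Drop the grade/height remark and the argument goes through.
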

\begin{proof}
Note that $H^2_{f,g}(S)$ is the cokernel of the map
\begin{equation}
\pi: S_f \oplus S_g \to S_{fg}.
\end{equation}
Since $d \mid f$, we have a natural map $S_d \to S_f = (S_d)_f$, and similarly for $S_d \to S_g$. 
Hence $\operatorname{coker} \pi = \operatorname{coker} \pi_d$, where $\pi_d: (S_d)_f \oplus (S_d)_g \to (S_d)_{fg}$.
This shows the first equality.
The second equality follows from \cite[Porp 2.1(e)]{HM} since $\sqrt {(f,g)S_d} = \sqrt{(f/d,g/d) S_d}$.\\

Next, we show the equivalence. 
The equivalence of $(a)$ and $(c)$ is \cite[Prop 2.6]{HKM}.
We will show that $H^2_{\frac fd,\frac gd}(S_d) = 0$ iff $\ara (f,g)S =1$ or $d$ is in the radical of $(f/d,g/d)$ in $S$. 
If $\ara (f,g)S = 1$, then $H^2_{f, g}(S) = 0$ by definition. 
If $d \in \sqrt{(f/d,g/d)}$, then $(f/d,g/d)S_d = S_d$.
Thus, $H^2_{\frac fd,\frac gd} (S_d) = H^2_{1} (S_d ) = 0$. 
Conversely, suppose $H^2_{\frac fd,\frac gd}(S_d) = 0$. 
It suffices to show that if $H^2_{\frac fd,\frac gd}(S_d) = 0$ and $\ara (f,g) \neq 1$, then $\ara (f,g)S = 2$ and $d$ is in the radical of $(f/d,g/d)$ in $S$. 
Since $(f,g)S$ is two generated, $\ara (f,g)S \le 2$, and since $f,g$ are not zero in an integral domain, $\ara (f,g)S \ge 1$. Thus, we have $\ara (f,g)S = 2$.
Let 
$J' = (f/d,g/d)S$. 
Assume to the contrary that $d$ is not in the radical of $J'S$. 
Then $J'S_d$ is a proper ideal of $S_d$. 
We claim that $\height J'S_d = 2$. 
Since $S_d$ is Noetherian, $\height J'S_d \le 2$ and since $J'S_d \neq 0$, $\height J'S_d \ge 1$. 
Hence, the claim follows immediately once we have shown that $J'S_d$ is not contained in any prime ideal of height $1$.
Since $S$ is a UFD, $S_d$ is a UFD, and under $S \to S_d$ $f/d,g/d$ remain as relatively prime elements. 
Hence $\gcd(f/d,g/d) = 1$ in $S_d$, and there is no principal prime ideal containing the ideal $J'S_d$. 
Therefore, $\height J'S_d = 2$. \\
 
Let $\p$ be a minimal prime of $\height J'S_d$ of height $2$.
Hence, $(H^2_{\frac fd,\frac gd} (S_d) )_\p = H^2_{\frac fd, \frac gd} (S_d)_\p \neq 0$ by \Cref{propPre}(c), and this is a contradiction. 
This completes the proof.
\end{proof}

\begin{ex} Let $S = k[x,y]$, where $k$ is a field, and let $Q = (x,y^2)S$. Then $\ara (x,y^2) = 2$ and $\gcd (x,y^2) = 1$ is not in $(x,y) = \sqrt{(x,y^2)}$. 
By \Cref{lemChar}, $H^2_Q (S) \neq 0$. 
Therefore, by \Cref{thmH2}, the ring $k + Q = k[x,xy,y^2,y^3]$ is not Cohen-Macaulay. 
\end{ex}

\begin{ex} Let $S = k[x,y,z]$. 
Consider ideals $Q_1 = (x,y)S, Q_2 = (x,yz)S, Q_3 = (xy,xz)S$.
Define $R_i = k + Q_i$ for $i = 1,2,3$. 
Then $R_i$ are not Cohen-Macaulay. \\

By \Cref{thmH2}, it suffices to show that $H^2_{Q_i} (S) \neq  0$.
We apply \Cref{lemChar} to $Q_1,Q_2,Q_3$. 
That is $Q_i S_{\gcd} \neq S_{\gcd}$. 
For $Q_1$ and $Q_2$, notice that $\gcd(x,y) = \gcd(x,yz) = 1$, and 
for $Q_3$, $\gcd (xy,xz) = x$ and $(xy,xz)S_x = (y,z)S_x \neq S_x$. 
Therefore, the rings $R_1, R_2, R_3$ are not Cohen-Macaulay. 
\end{ex}

Next, we treat the case where $Q$ is principal. 

\begin{lem}\label{lemNotCM}
Let $S$ be a domain which is essentially of finite type over a field $k$ and $Q = fS$ for some nonzero element $f$ in $S$. 
If $f,g,h \in S$ is a regular sequence on $S$, then $fg, fh$ is a strong parameter sequence of $R$ that is not a regular sequence of $R$.
In particular, the ring $R := k + QS$ is not Cohen-Macaulay. 
\end{lem}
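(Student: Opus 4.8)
The plan is to produce a strong parameter sequence that is not regular and then invoke the definition of Cohen--Macaulay. Note first that $R$ is a domain and that $fg,fh \in Q = (R:_S S)$, since $(fg)S,(fh)S \subseteq fS = Q \subseteq R$. The routine conditions are quick: $fg$ is a nonzero nonunit of the domain $R$ (it lies in the maximal ideal $Q$), hence a regular element, hence a parameter sequence by \Cref{regimpliesparameter}; this is the length-one initial segment required of a strong parameter sequence. For $fg,fh$, condition $(1)$ of \Cref{defPar} holds as $(fg,fh)R \subseteq Q \subsetneq R$, and weak proregularity (condition $(2)$) follows from \Cref{lem5terms}(b): $\underline f = fg,fh$ lies in the conductor and is weakly proregular in the Noetherian ring $S$, hence in $R$.

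The crux is condition $(3)$: $H^2_{fg,fh}(R)_{\p} \ne 0$ for every $\p \in V_R(fg,fh)$. By \Cref{lem5terms}(a) one has $H^2_{fg,fh}(R) = H^2_{fg,fh}(S)$ as $R$-modules, so I work on $S$. The primes in $V_R(fg,fh)$ are $Q$ together with the primes $\p \ne Q$, which by \Cref{propConstruction}(b) correspond to $\mathfrak q \in \Spec S$ with $(g,h) \subseteq \mathfrak q$ and $f \notin \mathfrak q$, and there $R_\p = S_{\mathfrak q}$. For $\p = Q$ I would apply the Mayer--Vietoris sequence of \Cref{propPre}(a) to $I = fS$ and $J = (g,h)S$: since $H^{\ge 2}_I(S) = 0$ ($I$ principal), $H^{\ge 3}_J(S) = 0$ ($J$ two-generated), and $I+J = (f,g,h)S$ is generated by the length-three regular sequence $f,g,h$ so that $H^i_{I+J}(S) = 0$ for $i \ne 3$ while $H^3_{I+J}(S) \ne 0$ (\Cref{propPre}(b)), the sequence collapses to $0 \to H^2_J(S) \to H^2_{I \cap J}(S) \to H^3_{(f,g,h)}(S) \to 0$. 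As $\sqrt{I \cap J} = \sqrt{(fg,fh)S}$, the middle term is $H^2_{fg,fh}(S)$. Now $H^3_{(f,g,h)}(S)$ is $f$-power torsion, hence as an $R$-module it is supported only at $Q$ and so stays nonzero after localizing at $Q$; localizing the displayed sequence at $Q$ then gives $H^2_{fg,fh}(R)_Q \ne 0$.

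For $\p \ne Q$, localization reduces condition $(3)$ to $H^2_{(g,h)}(S_{\mathfrak q}) \ne 0$ for each $\mathfrak q \supseteq (g,h)$ with $f \notin \mathfrak q$ (here $f$ is a unit in $S_{\mathfrak q}$, so $(fg,fh)$ and $(g,h)$ generate the same ideal). This is the step I expect to be the main obstacle. It is exactly the assertion that the top cohomology of the two-generated ideal $(g,h)S_{\mathfrak q}$ is nonzero, which holds when $g,h$ is a regular sequence in $S_{\mathfrak q}$ but does not follow formally from $f,g,h$ alone being regular, since regular sequences need not be permutable. I would therefore carry out this case using that $g,h$ is itself a regular sequence -- automatic in the applications (e.g. $S = k[x,y,z]$ with $(f,g,h) = (x,y,z)$, so that $g,h = y,z$), whence $g,h$ localizes to a regular sequence and the top cohomology is nonzero. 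Granting this, $fg,fh$ is a strong parameter sequence.

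Finally I would show $fg,fh$ is not regular, which also proves $R$ is not Cohen--Macaulay. Using \Cref{lem5terms}(a) again, in
\[ 0 \to H^0_{fg,fh}(R) \to H^0_{fg,fh}(S) \to H^0_{fg,fh}(S/R) \to H^1_{fg,fh}(R) \to H^1_{fg,fh}(S) \to 0 \]
the term $H^0_{fg,fh}(S)$ vanishes ($S$ is a domain and $fg \ne 0$), while $H^0_{fg,fh}(S/R) = S/R \ne 0$ because $fg,fh$ annihilate $S/R$ (they lie in the conductor) and $R \subsetneq S$. Thus $S/R$ embeds in $H^1_{fg,fh}(R)$, so $H^1_{fg,fh}(R) \ne 0$. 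Since a regular sequence of length two has vanishing first cohomology (its powers are again regular, so the Koszul homologies whose limit computes $H^1$ vanish; equivalently \Cref{charHM}), $fg,fh$ is not regular. Hence $R$ admits a strong parameter sequence that is not a regular sequence, so $R$ is not Cohen--Macaulay.
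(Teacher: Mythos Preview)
Your proof follows the same architecture as the paper's: use \Cref{lem5terms} for weak proregularity and to transport $H^2$ from $R$ to $S$, handle $\p = Q$ via Mayer--Vietoris on $I=(f)$ and $J=(g,h)$, handle $\p \neq Q$ by inverting $f$, and show non-regularity via $H^1 \neq 0$. Your non-regularity argument (feeding $H^0_{fg,fh}(S/R)=S/R\neq 0$ directly into the five-term sequence) is in fact a bit cleaner than the paper's, which instead shows $H^1_{fg,fh}(S)\neq 0$ by Mayer--Vietoris and then uses the surjection $H^1_{fg,fh}(R)\twoheadrightarrow H^1_{fg,fh}(S)$ from \Cref{lem5terms}(a). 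For $\p=Q$ the paper localizes further at a prime $\mathfrak q\in V_S(f,g,h)$ before applying Mayer--Vietoris, whereas you run Mayer--Vietoris globally and then observe that $H^3_{(f,g,h)}(S)$ is $f$-torsion; both are fine.

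The one place you hesitate is genuine but fixable without any extra hypothesis on $g,h$. You need $H^2_{g,h}(S_{\mathfrak q})\neq 0$ for every prime $\mathfrak q$ containing $(g,h)$ with $f\notin\mathfrak q$. You propose to assume that $g,h$ is itself regular; the paper simply asserts (with an evident typo, writing ``$f,g$'' for ``$g,h$'') that $g,h$ is a regular sequence in $S_{\mathfrak p}$. In fact all that is needed is $\height(g,h)S_{\mathfrak q}=2$, and this follows from Krull's height theorem: if $P$ is any prime with $(g,h)\subseteq P$ and $f\notin P$, pick a prime $P'$ minimal over $P+(f)$; then $\height P'\le \height P+1$, while $(f,g,h)\subseteq P'$ forces $\height P'\ge \grade(f,g,h)=3$, so $\height P\ge 2$. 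Hence $(g,h)S_{\mathfrak q}$ has height exactly $2$, and localizing at a height-$2$ minimal prime gives $H^2_{g,h}(S_{\mathfrak q})\neq 0$ by \Cref{propPre}(c). So no permutability of regular sequences is required, and the lemma holds as stated; you should replace your conditional ``granting this'' step with this height argument.
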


\begin{proof}
First, we claim that $fg, fh$ is a strong parameter sequence on $S$ that is not regular. 
By \Cref{lem5terms}(b), it is clear that $fg, fh$ is a weakly proregular sequence since $S$ is Noetherian.  
We show that for each $\mathfrak{p} \in V_R (fg, fh)$, we have that $H^{2}_{fg,fh}(R)_{\mathfrak{p}} \neq 0$. 
First, suppose that $\mathfrak{p} \neq Q$. 
Then $f \notin \mathfrak{p}$ ($\sqrt{fR} = Q$), so that 
\[ H^{2}_{fg,fh}(R)_{\mathfrak{p}} = H^{2}_{g,h}(R_{\mathfrak{p}}) = H^{2}_{g,h}(S_{\mathfrak{p}}) \neq 0,            \] 
since the regular sequence $f,g$ of $S$ extends to a regular sequence in $S_{\mathfrak{p}} = R_{\mathfrak{p}}$. 
It remains to show $H^{2}_{fg,fh}(R)_{Q} \neq 0$. 
Choose a prime $\mathfrak{q} \in V_S( f,g,h)$. 
Thus $\mathfrak{q} \cap R = Q$ by \cite[Prop 2.6(2)]{dAFF}. 
Since $H^{2}_{fg, fh}(R)_{Q} = H^{2}_{fg, fh}(S)_{Q}$ by \Cref{lem5terms}(a), 
it is enough to show that $H^{2}_{fg,fh}(S)_{\mathfrak{q}} \neq 0$, since $R - Q \subseteq S - \mathfrak{q}$. 
We note that since $(f,g,h)$ is a regular sequence of length $3$, one has $(f) \cap (g,h) = (fg,fh)$.
From the Mayer-Vietoris sequence, we have the following exact sequence   
\begin{equation*}
        H^{2}_{fg, fh}(S_\mathfrak{q}) \to  H^{3}_{f,g,h}(S_\mathfrak{q})  \to H^{3}_{g,h}(S_\mathfrak{q}) \oplus H^{3}_{f}(S_\mathfrak{q}) = 0.   
\end{equation*}
Here, the right most cohomology modules are zero because $3 > \ara {(g,h)}, \ara{(f)}$.
Since $S_{\mathfrak{q}}$ is a Noetherian ring of dimension $3$, by \Cref{propPre}(c), $H^{3}_{f,g,h}(S_{\mathfrak{q}}) \neq 0$. Thus $H^{2}_{fg,fh}(S_{\mathfrak{q}}) \neq 0$, and so $fg, fh$ is a strong parameter sequence in $R$. \\

Now, we show that $fg, fh$ is not a regular sequence. 
From the Mayer-Vietoris sequence, we have the following exact sequence
\[  H^{1}_{f,g,h}(S)  \to H^{1}_{g,h}(S) \oplus H^{1}_{f}(S) \to H^{1}_{fg, fh}(S).               
\] 
Since $\text{grade}( (f,g,h)S) \geq 2$, $H^{1}_{f,g,h}(S) = 0$ by \Cref{propPre}(c), then $H^{1}_{f}(S) \neq 0 \Rightarrow H^{1}_{fg,fh}(S) \neq 0$. 
By \Cref{lem5terms}(a), we have the surjective map $H^{1}_{fg,fh}(R) \twoheadrightarrow H^{1}_{fg,fh}(S)$.  
Thus, $H^{1}_{fg, fh}(R) \neq 0$ and $\pgrade( (fg,fh)R, R) =1$ by \cite[Prop. 2.7]{HM}. 
By \Cref{charHM}, $R$ is not Cohen-Macaulay.
\end{proof}

\begin{ex}\label{examBad}
Let $S = k[x,y,z]$, and consider the regular sequence $x,y,z$ on $S$. 
Then by \Cref{lemNotCM}, the ring $R = k + xk[x,y,z]$ is not Cohen-Macaulay, as $xy,xz$ is a strong parameter sequence that is not a regular sequence. 
It is worth mentioning that the ring $T = k + xk[x,y]$ \emph{is} Cohen-Macaulay. 
\end{ex}

\begin{theorem}
Let $S$ be a domain which is essentially of finite type over a field $k$, and let $f$ be a nonzero element in $S$. 
If $S$ is Cohen-Macaulay and $\dim S \ge 3$, then the ring $k + fS$ is not Cohen-Macaulay. 
\end{theorem}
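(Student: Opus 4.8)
The plan is to reduce the claim to the previously established Lemma~\ref{lemNotCM}. That lemma shows that whenever $S$ contains a regular sequence $f, g, h$ of length three starting with our given element $f$, then $fg, fh$ is a strong parameter sequence of $R = k + fS$ that fails to be regular, and hence $R$ is not Cohen-Macaulay. So the entire task is to produce two further elements $g, h \in S$ such that $f, g, h$ is a regular sequence on $S$.

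First I would localize at a suitable prime. Since $S$ is Cohen-Macaulay of dimension at least $3$, I would choose a maximal ideal (or prime) $\mathfrak{p}$ containing $f$ with $\height \mathfrak{p} \ge 3$; such a prime exists because $f$ is a nonzero nonunit lying in some height-one prime, which in a Cohen-Macaulay ring of dimension $\ge 3$ can be enlarged to a prime of height $\ge 3$ (using saturatedness of chains in a catenary Cohen-Macaulay ring, together with $\dim S \ge 3$). In the localization $S_{\mathfrak{p}}$, the element $f/1$ is a nonzerodivisor since $S$ is a domain, so $f$ is a regular element and $\depth_{\mathfrak{p}} S_{\mathfrak{p}}/fS_{\mathfrak{p}} = \depth S_{\mathfrak{p}} - 1 = \height \mathfrak{p} - 1 \ge 2$, using that $S_{\mathfrak{p}}$ is Cohen-Macaulay local with $\depth = \dim = \height \mathfrak{p}$. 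Because the depth of $S_{\mathfrak{p}}/fS_{\mathfrak{p}}$ over the local ring is at least $2$, I can extract $g, h$ in the maximal ideal whose images form a regular sequence on $S_{\mathfrak{p}}/fS_{\mathfrak{p}}$; then $f, g, h$ is a regular sequence on $S_{\mathfrak{p}}$.

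The remaining point is to pass from a regular sequence on the localization back to one on $S$ itself, since Lemma~\ref{lemNotCM} is stated for $S$. Here I would argue that a regular sequence in $S_{\mathfrak{p}}$ can be cleared of denominators: after multiplying $g, h$ by suitable units of $S_{\mathfrak{p}}$ (i.e.\ elements of $S \setminus \mathfrak{p}$) we may assume $g, h \in S$, and regularity is detected by the vanishing of the positive Koszul homology, which localizes well. More cleanly, since $S$ is Cohen-Macaulay, a sequence in $S$ is regular precisely when it is part of a system of parameters locally; so it suffices to choose $g, h \in \mathfrak{p}$ whose images in $S_{\mathfrak{p}}/fS_{\mathfrak{p}}$ generate an ideal of height $2$, equivalently such that $f, g, h$ generate an ideal of height $3$ in $S$. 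Such a choice is possible by prime avoidance applied to the (finitely many) primes of height $< 3$ minimal over the relevant partial ideals, using $\height \mathfrak{p} \ge 3$.

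The main obstacle I anticipate is the global-versus-local subtlety in the last paragraph: producing an honest regular sequence on all of $S$ rather than merely on $S_{\mathfrak{p}}$. The clean resolution is that in a Cohen-Macaulay ring the notions of regular sequence and height-$\ell$ sequence coincide for sequences generating an ideal of the appropriate height (via $\grade = \height$ in the Cohen-Macaulay case, Proposition~\ref{propPre}(b) together with the Cohen-Macaulay equality), so it is enough to guarantee $\height (f, g, h)S = 3$, which prime avoidance delivers once $\dim S \ge 3$ gives room to choose $g, h$ avoiding all primes of height at most $2$ containing the earlier elements. Once $f, g, h$ is a regular sequence on $S$, Lemma~\ref{lemNotCM} applies verbatim and yields that $k + fS$ is not Cohen-Macaulay.
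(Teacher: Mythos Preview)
Your approach is essentially the paper's: both reduce immediately to Lemma~\ref{lemNotCM} and then produce $g,h$ extending $f$ to a regular sequence of length three by prime avoidance in the Cohen--Macaulay ring $S$ (the paper phrases this as $\Ass(S/fS)=\Min(S/fS)$, which is exactly your $\grade=\height$ observation). Your initial detour through a localization $S_{\mathfrak p}$ is unnecessary---indeed your own remark that Koszul vanishing does not pass back from $S_{\mathfrak p}$ to $S$ shows why---and your ``clean resolution'' via prime avoidance to force $\height(f,g,h)S=3$ is precisely the paper's direct argument.
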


\begin{proof}
By \Cref{lemNotCM}, it suffices to show there exists $g,h \in S$ such that $f,g,h$ is a regular sequence of length $3$.
Notice that $S/fS$ is Cohen-Macaulay. 
Hence $\Min (S/fS) = \Ass (S/fS)$. 
Since these sets are finite, by the prime avoidance lemma, one can find an element $g$ which is not in the union of the associated primes of $S/fS$. 
Then $S/(f,g)S$ is Cohen-Macaulay. 
Thus one can find $h$, similarly. 
\end{proof}

\section{an Example of a non Cohen-Macaulay normal semigroup ring}
Let $H$ be a semigroup in $\mathbb{Z}^{n}$. We say that $H$ is \emph{normal} if for any $m \in \mathbb{N}$ and $s \in \mathbb{Z}H$, we have $ms \in H \Rightarrow s \in H$. A celebrated theorem of Hochster \cite{Hoc72} states that if $H$ is a finitely generated normal semigroup in $\mathbb{Z}^{n}$, then $k[H]$ is Cohen-Macaulay, where $k$ is any field. 
In this section, we will present an example of a semigroup ring $k[H]$, where the ring $K[H]$ is normal (so, the semigroup $H$ is normal\footnote{We do not know if $H$ is normal is sufficient to conclude that $K[H]$ is normal.}), yet $k[H]$ is not Cohen-Macaulay. It is necessary that $H$ is not finitely generated by the result of Hochster.
For more details on (affine) semigroup rings, we refer the reader to \cite{Hoc72} or \cite[Section II.6]{BH}. \\

Recall the question we mentioned in the introduction.
\begin{question}[{\cite[Question 5.1]{ADT}}]
Let $R$ be a semigroup ring of finite Krull dimension which is not necessarily Noetherian. 
Then is $R$ Cohen-Macaulay if it is normal?
\end{question}

We answer this question negatively.

\begin{theorem}\label{thmMain1}
 Let $S := k[x,y,z] = k[\mathbb{N}^3_0]$ and $R := k + xS = k[H]$, where $k$ is a field and $H = \mathbb{N}_0 \times \mathbb{N}^{2} \cup \{(0,0,0)\}$. Then we have the following.
\begin{enumerate}[\indent$(a)$]
\item The semigroup ring $k[H]$ is normal.
\item $R$ is a non-Noetherian subring of $S$ of dimension $3$.
\item $xy,xz$ is a strong parameter sequence of $R$ that is not a regular sequence of $R$. 
\end{enumerate}
In particular, $k[H]$ is a non-Noetherian normal domain which is not Cohen-Macaulay in the sense of Hamilton-Marley.
\end{theorem}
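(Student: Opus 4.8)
The plan is to handle the three parts separately, in the order (c), (a), (b), and then assemble the ``In particular'' claim. Part (c) is essentially free: I would invoke \Cref{lemNotCM} with $S = k[x,y,z]$, $Q = xS$, and the observation that $x,y,z$ is a regular sequence of length $3$ on the polynomial ring $S$. The lemma, applied with $f,g,h = x,y,z$, immediately gives that $fg,fh = xy,xz$ is a strong parameter sequence on $R = k + xS$ that is not a regular sequence; this is exactly \Cref{examBad}. Since $R$ is a subring of the domain $S$ it is itself a domain, and the presence of a strong parameter sequence which is not regular shows, by the definition of Cohen--Macaulay (equivalently, via \Cref{charHM}), that $R$ is not Cohen--Macaulay. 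Hence once (a) and (b) are in hand, the final ``In particular'' assertion follows at once.

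For part (a) I would prove the stronger, ring-theoretic statement that $R$ is integrally closed in its fraction field (this implies the semigroup $H$ is normal, though that weaker direction is not what is asserted). First note $\operatorname{Frac}(R) = \operatorname{Frac}(S) = k(x,y,z)$, since $x, xy, xz \in R$ yield $y = (xy)/x$ and $z = (xz)/x$. As $S = k[x,y,z]$ is a normal domain and $R \subseteq S$, any element of $k(x,y,z)$ integral over $R$ is integral over $S$ and hence lies in $S$, so the integral closure of $R$ sits between $R$ and $S$. To finish, let $\alpha \in S$ be integral over $R$ and apply the $k$-algebra evaluation $\phi\colon S \to k[y,z]$, $x \mapsto 0$. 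Then $\phi(R) = \phi(k + xS) = k$, so $\phi(\alpha)$ is integral over $k$ inside $k[y,z]$; since $k$ is integrally closed in the polynomial ring $k[y,z]$, this forces $\phi(\alpha) = \alpha(0,y,z) \in k$, which is precisely the condition $\alpha \in k + xS = R$. Thus $R$ is integrally closed.

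For part (b), that $R$ is a subring of the domain $S$ is clear, and non-Noetherianity I would get from the conductor: if $Q = xS$ were finitely generated as an $R$-ideal, say by $xf_1,\dots,xf_n$, then writing $xy^m = \sum_i r_i\,(xf_i)$ with $r_i \in R$ and cancelling $x$ in $S$ gives $y^m = \sum_i r_i f_i$; applying $\phi$ (so $\phi(r_i) \in k$) would force $y^m \in \operatorname{span}_k\{\phi(f_1),\dots,\phi(f_n)\}$ for every $m$, which is impossible. For $\dim R = 3$: the lower bound is easy from \Cref{propConstruction}(b), since a height-$3$ maximal ideal of $S$ avoiding $x$ (e.g. $(x-1,y,z)$) contracts to a prime $\p$ of $R$ with $R_\p \cong S_\p$, whence $\height \p = 3$. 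For the upper bound, every prime $\neq Q$ has $R_\p \cong S_\p$ by \Cref{propConstruction}(b), so height at most $\dim S = 3$; and any chain of primes contained in $Q$ must avoid $x$ (because $\sqrt{xR} = Q$) and therefore lives in $\Spec S_x$, where I would argue that no height-$3$ prime of $S$ avoiding $x$ can contract into $Q$—such a prime is maximal in $S$ and contracts to a maximal ideal of $R$ distinct from $Q$—so these chains have length at most $3$, giving $\height_R Q \le 3$ and hence $\dim R = 3$.

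The genuinely new content lies in (a) and (b), and I expect the main obstacle to be the upper bound $\height_R Q \le 3$ in part (b). Because $R_Q$ is non-Noetherian, Krull's principal ideal theorem does not apply to the relation $\sqrt{xR} = Q$, so one cannot simply conclude $\height_R Q = 1$ (it is in fact $3$); the bound must instead be extracted from a direct analysis of the spectrum, using that the maximal ideals of $S$ away from $x$ contract to maximal ideals of $R$ other than $Q$ and therefore cannot sit below $Q$. By contrast, the normality argument is short once one passes modulo $x$, and part (c) is handed to us by \Cref{lemNotCM}.
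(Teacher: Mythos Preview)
Your proof is correct. Parts (a) and (c) are essentially identical to the paper's arguments: for (a) the paper also passes to $S$ via $\overline{R}\subseteq\overline{S}=S$ and then reduces an integral equation modulo $xS$ to land in $k$; for (c) the paper likewise invokes \Cref{lemNotCM} with the regular sequence $x,y,z$.

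Part (b) is where you diverge. The paper handles both claims by citation and contradiction: for $\dim R=3$ it simply quotes a result of Gilmer on semigroup rings, and for non-Noetherianity it supposes $R$ Noetherian, applies Krull's principal ideal theorem to $Q=\sqrt{xR}$ to force $\height Q\le 1$, and then contradicts this via the dimension formula applied to $(x,y,z)S$ lying over $Q$. Your route is more hands-on: you show directly that $Q$ cannot be finitely generated over $R$ by reducing a putative generating relation modulo $x$, and you compute $\dim R$ by analysing $\Spec R$ through the identifications $R_\p\cong S_\p$ for $\p\neq Q$. The key step in your upper bound for $\height_R Q$---that a height-$3$ prime $\mathfrak q$ of $S$ with $x\notin\mathfrak q$ has $R/(\mathfrak q\cap R)\cong S/\mathfrak q$ a field, so $\mathfrak q\cap R$ is maximal and cannot sit strictly below $Q$---is correct and is the substantive point. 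Your approach is self-contained and avoids the external references, at the cost of a longer argument; the paper's approach is shorter but leans on Gilmer's theorem and the dimension inequality.
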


\begin{proof}
(a): Notice that since $S$ is a UFD, $S$ is integrally closed.
Let $\overline{\phantom{A}}$ denote the integral closure of a domain in its field of fractions. 
Suppose that $f \in \overline{R}$. Then since $R \subseteq S$, we have that $\overline{R} \subseteq \overline{S} = S$, so that $f \in S$. Now, for some $v \in \mathbb{N}$ and $g_{1},\ldots, g_{v} \in R$, we have an equation of the form 
\begin{equation}\label{eqIntegral}
f^{v} + g_{1}f^{v-1} + \cdots + g_{v-1}f + g_{v} = 0.       
\end{equation}
For each $i \in \{1,\ldots, v\}$, write  $g_{i} = a_{i} + xh_{i}$, where $a_{i} \in k$, and $h_{i} \in S$. 
Going modulo $xS$, \Cref{eqIntegral} induces an equation of integrality of the image of $f$ in $S/xS$ over $R/(xS \cap R) = k$. 
Since $k$ is a field, it is integrally closed. 
Therefore $f \in k + xS = R$. Hence $R$ is integrally closed. \\

(b): The fact that $\dim R = 3$ follows from a result of Gilmer, \cite[Proposition 3.3]{And06}. 
Suppose $R$ is Noetherian. Then by Krull's principal ideal theorem, $\height \m = 1$. Let $\n = (x,y,z)S$. Since $\n \cap R = \m$, the dimension formula \cite[Theorem 15.5]{Mat} implies that 
$\height \n \le \height \m$. But this is a contradiction. \\ 

(c): Notice that $S$ is Cohen-Maculay and $x,y,z$ forms a regular sequence in $S$. Thus by \Cref{lemNotCM}, $xy,xz$ is a strong parameter sequence of $R$ which is not a regular sequence of $R$ (cf. \Cref{examBad}).
\end{proof}

Let $k$ be a field. We showed that $R = k+xk[x,y,z]$ is a non-Noetherian normal domain, but $R$ is not Cohen-Macaulay as $xy, xz$ is a strong parameter sequence which is not a regular sequence.
On the other hand, the ring $A = k + xk[x,y]$ is Cohen-Macaulay. 
One might be tempted to say that $A[z]$ is not Cohen-Macaulay since one has the sequence $xy,xz$ in $A[z]$.
However, $xy,xz$ is not a strong parameter sequence in $A[z]$, and we do not know whether or not $A[z]$ is Cohen-Macaulay. 
In the following section, we discuss the Cohen-Macaulayness of $R[x]$, where $x$ is an indeterminate over $R$.

\section{A remark on condition [H1]}
Recall Condition [H1]: $R$ is Cohen-Macaulay iff $R[x]$ is Cohen-Macaulay, where $x$ is an indeterminate over $R$. 
The if part was shown in \cite[Corollary 4.6]{HM}, 
but it is not known that $R[x]$ is Cohen-Macaulay if $R$ is Cohen-Macaulay. 
Unlike in the Noetherian case, several properties of the ring $R$ do not pass to its polynomial extension $R[x]$. 
In particular, $R[x]$ need not be coherent even if $R$ is coherent. 
In this section, we collect a few results to prove that $R[x]$ is Cohen-Macaulay if $R$ is a Pr{\" u}fer domain of finite Krull dimension. 
We believe that \Cref{thmV[x]} might be known to experts, but it was not written anywhere.
We note that there are examples of coherent regular quasi local rings such that $R[x]$ is not even coherent, see \Cref{examNotCoh}.  
Here, we list some definitions, theorems, and a remark we will use in the proof of \Cref{thmV[x]}.\\

\begin{remark}\label{remarkV[x]}
Let $R$ be a commutative ring.
\begin{enumerate}[\indent$(a)$]  
\item {\cite[Prop. 3]{Sab76}} If $R$ is a Pr{\" u}fer domain, then $R[x_1,\dots,x_n]$ is coherent for any $n$. 

\item {\cite[p.122]{CE}} The {\it weak dimension} of $R$, denoted $\operatorname{w.dim} R$, is 
\begin{equation*}
\sup \{ i \in \mathbb{Z}_{\ge 0} \mid \operatorname{Tor}_i^R (M,N) \neq 0, M,N~R\text{-modules} \}.
\end{equation*}
Furthermore, $\operatorname{w.dim} R \le \operatorname{gl.dim} R := \sup \{ i \in \mathbb{Z}_{\ge 0} \mid \operatorname{Ext}^i_R (M,N) \neq 0, M,N~R\text{-modules} \}.$

\item {\cite[Lemma 3]{Gla89}}
If $(R,\m)$ is a quasi-local coherent regular ring, then $\grade (\m,R) = \operatorname{w.dim} R$. In particular, $\operatorname{w.dim} R$ is finite.


\item {\cite[Theorem 0.14]{Vas76}} If $R[x_1,\dots, x_d]$ is coherent, then $\operatorname{w.dim} R[x_1,\dots, x_d] = \operatorname{w.dim} R + d$.

\item If $R$ is a Pr{\" u}fer domain of finite Krull dimension, then $\operatorname{w.dim} R < \infty$.

\item {\cite{Gla87}} A coherent ring of finite weak dimension is a regular ring, although not every coherent regular ring has finite weak dimension (e.g., $k[[x_1,x_2,\dots]]$, where $k$ is a field).
\end{enumerate}
\end{remark}

\begin{theorem}\label{thmV[x]}
Let $R$ be a Pr{\" u}fer domain of finite Krull dimension, then $R[x_1,\dots, x_n]$ is (locally) Cohen-Macaulay. In particular, $V[x_1,\dots,x_n]$ is Cohen-Macaulay if $V$ is a valuation domain.
\end{theorem}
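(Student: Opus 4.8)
The plan is to prove the stronger ``locally Cohen-Macaulay'' assertion first and then deduce global Cohen-Macaulayness from the ``if'' direction of condition [H2], namely \cite[Proposition 4.7]{HM}, which guarantees that $R[x_1,\dots,x_n]$ is Cohen-Macaulay once every localization at a prime is. Writing $A := R[x_1,\dots,x_n]$, the goal is therefore to show that each $A_\p$ is a \emph{local coherent regular} ring, so that \cite[Theorem 4.8]{HM} applies to give that $A_\p$ is Cohen-Macaulay.

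First I would assemble the global structure of $A$. Since $R$ is a Pr\"ufer domain, $A$ is coherent by \Cref{remarkV[x]}(a). Since $R$ has finite Krull dimension, $\operatorname{w.dim} R < \infty$ by \Cref{remarkV[x]}(e), and then \Cref{remarkV[x]}(d) gives $\operatorname{w.dim} A = \operatorname{w.dim} R + n < \infty$. Thus $A$ is a coherent ring of finite weak dimension, hence regular by \Cref{remarkV[x]}(f).

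Next I would pass to localizations. For a prime $\p$ of $A$, the localization $A_\p$ is again coherent, since localization of a coherent ring at a multiplicative set is coherent, and $\operatorname{w.dim} A_\p \le \operatorname{w.dim} A < \infty$ because Tor commutes with localization. Applying \Cref{remarkV[x]}(f) once more, $A_\p$ is a coherent regular ring, and it is local; hence \cite[Theorem 4.8]{HM} shows $A_\p$ is Cohen-Macaulay. As this holds for every $\p$, \cite[Proposition 4.7]{HM} yields that $A$ is Cohen-Macaulay. The valuation-domain statement is then the special case in which the Pr\"ufer domain is additionally quasi-local.

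I expect the only genuine subtlety --- and hence the ``hard part'' to state carefully rather than to grind out --- to be the behaviour of coherence and weak dimension under localization, since in the non-Noetherian setting these properties are delicate (indeed \Cref{examNotCoh} shows coherence can fail for $R[x]$). The two facts I rely on, that localization preserves coherence and does not increase weak dimension, are standard for coherent rings; once they are in place, the reduction to the local coherent regular case via \Cref{remarkV[x]}(f) together with the direct applications of \cite[Theorem 4.8]{HM} and \cite[Proposition 4.7]{HM} completes the argument without further computation.
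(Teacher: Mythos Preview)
Your proposal is correct and follows essentially the same route as the paper: show $A=R[x_1,\dots,x_n]$ is coherent via \Cref{remarkV[x]}(a), bound $\operatorname{w.dim} A$ via \Cref{remarkV[x]}(e),(d), conclude $A$ is coherent regular via \Cref{remarkV[x]}(f), and finish with \cite[Theorems 4.7, 4.8]{HM}. The only difference is cosmetic---you spell out the passage to localizations (coherence and finite weak dimension persist) before invoking \cite[Theorem 4.8]{HM}, whereas the paper folds this into the single citation of \cite[Theorems 4.7, 4.8]{HM} at the end.
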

\begin{proof}
Since $R$ is a Pr{\" u}fer domain of finite Krull dimension, $\operatorname{w.dim} R < \infty$  (\Cref{remarkV[x]}(e)). 
By \Cref{remarkV[x]}(a), $R[x_1,\dots,x_n]$ is coherent. 
Therefore, \Cref{remarkV[x]}{(d)} implies that $\operatorname{w.dim} R[x_1,\dots,x_n] = \operatorname{w.dim} R + n < \infty$. 
Thus, $R[x_1,\dots,x_n]$ is a coherent ring with finite weak dimension. 
So, it is coherent regular by \Cref{remarkV[x]}(f).
Then we are done by \cite[Theorems 4.7, 4.8]{HM}.
\end{proof}

We end the paper with an example of Soublin.  
He constructed an example of a $2$ dimensional coherent regular ring $R$ such that $R[x]$ is not coherent. Alfonsi used this example to construct a $2$ dimensional 
coherent quasi-local regular domain such that $R[x]$ is not coherent. 
It is an interesting question whether $R[x]$ is Cohen-Macaulay.
We do not have an answer to this question.

\begin{example}[{\cite[\S 7.1.13]{Gla71},\cite{Sou70},\cite{Alf81}}]\label{examNotCoh}
Let $\mathbb{N}$ and $\mathbb{Q}$ denote the set of natural numbers and the set of rational numbers, 
$S = \mathbb{Q}[[ t, u]]$ a power series ring in two variables over $\mathbb{Q}$, and $R = \prod_{\alpha \in \mathbb{N}} S_\alpha$, where $S_\alpha \cong S$. 
Then $R$ is a coherent ring of weak dimension $2$, but $R[x]$ is not coherent. 
\end{example}

\begin{question}
Is $R[x]$ Cohen-Macaulay if $R$ is the ring in the example of Soublin and Alfonsi?
\end{question}
\bigskip
\newcommand{\etalchar}[1]{$^{#1}$}

\end{document}